\newcommand{\cone}{{\mathbf{C}}}
\DeclareFontFamily{U}{rsf}{}
\DeclareFontShape{U}{rsf}{m}{n}{
  <5> <6> rsfs5 <7> <8> <9> rsfs7 <10->  rsfs10}{}
\DeclareMathAlphabet{\mathscr}{U}{rsf}{m}{n}
\newtheorem{theorem}{Theorem}[section]
\newtheorem{lemma}[theorem]{Lemma}
\newtheorem{proposition}[theorem]{Proposition}
\newtheorem{question}[theorem]{Question}
\theoremstyle{definition}
\newtheorem{definition}[theorem]{Definition}
\newtheorem{construction}[theorem]{Construction}
\newtheorem{example}[theorem]{Example}
\theoremstyle{remark}
\newtheorem{remark}[theorem]{Remark}
\numberwithin{equation}{section}
\newcommand {\ul} {\underline}
\newcommand {\fs}{{\mathrm{fs}}}
\newcommand {\fom}  {\mathfrak{m}}
\newcommand{\NN} {\mathbb{N}}
\newcommand{\ZZ} {\mathbb{Z}}
\newcommand{\QQ} {\mathbb{Q}}
\newcommand{\RR} {\mathbb{R}}
\newcommand{\CC} {\mathbb{C}}
\newcommand{\PP} {\mathbb{P}}
\renewcommand{\AA} {\mathbb{A}}
\newcommand{\GG} {\mathbb{G}}
\newcommand {\fop}  {\mathfrak{p}}
\newcommand{\virt}{\mathrm{virt}}
\newcommand {\shA} {\mathcal{A}}
\newcommand {\shL} {\mathcal{L}}
\newcommand {\shM} {\mathcal{M}}
\newcommand {\shO} {\mathcal{O}}
\newcommand {\shT} {\mathcal{T}}
\newcommand {\shP} {\mathcal{P}}
\newcommand {\shU} {\mathcal{U}}
\newcommand {\shX} {\mathcal{X}}
\newcommand {\foX} {\mathfrak{X}}
\newcommand {\Bl} {\operatorname{Bl}}
\newcommand {\cl} {\operatorname{cl}}
\newcommand {\di} {{\rm d}}
\newcommand {\Div} {\operatorname{Div}}
\newcommand {\dual} {{\vee}}
\newcommand {\ev} {\mathrm{ev}}
\newcommand {\gp} {{\operatorname{gp}}}
\newcommand {\Hom} {\operatorname{Hom}}
\newcommand {\im} {\operatorname{im}}
\newcommand {\kk} {{\Bbbk}}
\newcommand {\liminv} {\varprojlim}
\newcommand {\lra} {\longrightarrow}
\newcommand {\ls} {\dagger}
\newcommand {\M} {\mathcal{M}}
\newcommand {\maxid} {\mathfrak{m}}
\renewcommand{\O} {\mathcal{O}}
\newcommand {\ol} {\overline}
\renewcommand{\P} {\mathscr{P}}
\newcommand {\pr} {\operatorname{pr}}
\newcommand {\Proj} {\operatorname{Proj}}
\newcommand {\scrP} {\mathscr{P}}
\newcommand {\scrS} {\mathscr{S}}
\newcommand {\scrM} {\mathscr{M}}
\newcommand {\Spec} {\operatorname{Spec}}
\newcommand {\Spf} {\operatorname{Spf}}
\newcommand {\tors} {\mathrm{tors}}
\newcommand {\Trop} {\mathrm{Trop}}
\newcommand {\lfor} {\llbracket}
\newcommand {\rfor} {\rrbracket}
\newcommand {\X} {\shX}
\def\mydate{\ifcase\month \or January\or February\or March\or
April\or May\or June\or July\or August\or September\or October\or 
November\or December\fi \space\number\day,\space\number\year}
\newlength{\picwidth} \setlength{\picwidth}{.75\textwidth}
\newlength{\miniwidth} \setlength{\miniwidth}{.5\textwidth}
\newlength{\nanowidth} \setlength{\nanowidth}{.33\textwidth}
\newlength{\melowidth} \setlength{\melowidth}{.88\textwidth}
\newlength{\leftminiwidth} \setlength{\leftminiwidth}{.45\textwidth}
\newlength{\rightminiwidth} \setlength{\rightminiwidth}{.45\textwidth}
\newlength{\minipagewidth} \setlength{\minipagewidth}{.45\textwidth}
\begin{document}
%====================================================
%More of Mark's definitions.
\def\mapright#1{\smash{
 \mathop{\longrightarrow}\limits^{#1}}}
\def\mapleft#1{\smash{
 \mathop{\longleftarrow}\limits^{#1}}}
\def\exact#1#2#3{0\to#1\to#2\to#3\to0}
\def\mapup#1{\Big\uparrow
  \rlap{$\vcenter{\hbox{$\scriptstyle#1$}}$}}
\def\mapdown#1{\Big\downarrow
  \rlap{$\vcenter{\hbox{$\scriptstyle#1$}}$}}
\def\dual#1{{#1}^{\scriptscriptstyle \vee}}
\def\invlim{\mathop{\rm lim}\limits_{\longleftarrow}}
\def\rto{\raise.5ex\hbox{$\scriptscriptstyle ---\!\!\!>$}}
\def\cy{\check y}
%===========================================================
\input epsf.tex

\title[Mirror symmetry]{Intrinsic mirror symmetry and punctured Gromov-Witten invariants}

\author{Mark Gross} 
\address{DPMMS, Centre for Mathematical Sciences,
Wilberforce Road, Cambridge, CB3 0WB, UK}
%\curraddr{}
\email{mgross@dpmms.cam.ac.uk}
\thanks{This work was partially supported by NSF grant 1262531 and
a Royal Society Wolfson Research Merit Award.}

\author{Bernd Siebert} \address{FB Mathematik,
Universit\"at Hamburg, Bundesstra\ss e~55, 20146 Hamburg,
Germany}
%\curraddr{}
\email{siebert@math.uni-hamburg.de}
\date{\today}
\maketitle
\tableofcontents

%===========================================================

\section*{Introduction.}

The very first occurences of mirror symmetry in the string-theoretic
literature involved constructions of mirror pairs. The work of
Greene and Plesser in \cite{GrPl} produced the mirror to the quintic
three-fold via an orbifold construction, and the work of Candelas,
Lynker and Schimmrigk \cite{CLS} found a symmetry in Hodge numbers
in the classification of Calabi-Yau hypersurfaces in weighted
projective spaces. While mirror symmetry has developed  in many
directions, initially spurred by the genus zero calculations of
\cite{COGP}, one of the essential open questions in the field has
remained: how broadly do mirrors exist?

The first broad mathematical constructions of mirror pairs were the 
Batyrev \cite{Bat} and Batyrev--Borisov \cite{BB} constructions
respectively for Calabi-Yau hypersurfaces and complete intersections
in toric varieties. There have been many other proposed
constructions of mirrors to Calabi-Yau varieties, but such
constructions tend to yield a smaller number of examples and are
rarely completely orthogonal to the Batyrev and Batyrev--Borisov
constructions. In particular, these two constructions remain the
most practical and widely used.

In 1994, Givental \cite{GivICM} proposed extending mirror symmetry to the
case of Fano manifolds, in which case the mirror is expected to be a
Landau-Ginzburg model, i.e., a variety equipped with a regular
function. This has resulted in a significant expansion of the realm
of relevance of mirror symmetry. Analogously to the Batyrev--Borisov
case, Givental and Hori--Vafa \cite{HoriVafa} provided constructions of
mirrors of Fano  complete intersections in toric varieties, later
generalized to the case where the anti-canonical divisor is nef
\cite{Ir1}. Even more recently, aspects of
mirror symmetry have been observed much more generally for complete
intersections in toric varieties with nef but non-zero canonical
class in \cite{Ir2}, \cite{GKR}.

This might suggest that mirror constructions are largely toric
phenomena, leaving one to wonder to what extent one can find mirror 
constructions which
apply more broadly. 
If mirror symmetry were to be a general phenomenon 
for Calabi-Yau varieties, one should have a construction which goes
beyond toric geometry.  Indeed, while it is frequently difficult to show
that any given Calabi-Yau variety is \emph{not} a complete
intersection in a toric variety, the  expectation is that a vast
majority of Calabi-Yau varieties are not complete intersections.
Thus one desires more intrinsic constructions which do not depend on
embeddings in toric varieties.

The first sign of an intrinsic geometry of mirror symmetry was the 1996
proposal of Strominger, Yau and Zaslow (SYZ) \cite{SYZ}. They proposed a
conjectural picture in the realm of differential geometry, suggesting that
a mirror pair $X$, $\check X$ of Calabi-Yau manifolds should carry dual
special Lagrangian torus fibrations $f:X\rightarrow B$, $\check f:\check
X\rightarrow B$ over some base $B$. Although recently there has been
significant progress in constructing special Lagrangian submanifolds
and currents (see \cite{HS} and \cite{CW}), special Lagrangian fibrations
remain elusive. The reader may consult the first
author's contribution to the proceeedings of the 
2005 edition of the AMS Algebraic Geometry Symposium, \cite{GAMS} for
more discussion of the conjecture.

Nevertheless, as explained in \cite{GAMS}, the SYZ conjecture led
directly to our joint project which now appears to be colloquially
referred to as the \emph{Gross--Siebert program}. This project
reinterpreted the SYZ conjecture inside of algebraic geometry. In
particular, the base of the SYZ fibration $B$ carries additional
structure: it is not just a topological space but an \emph{affine
manifold with singularities}. One forgets the fibration, and hopes
to work purely with the base $B$ with this additional structure. As
also suggested by Kontsevich and Soibelman in the context of \cite{KS},
one should view $B$ as a dual intersection complex of a degeneration
of Calabi-Yau manifolds.

An initial achievement of this program was our 2007 result
\cite{Annals},  where we showed that such an idea lead to a
theoretically powerful mirror construction. This construction works
in the context of \emph{toric degenerations}. A toric degeneration,
roughly, is a degenerating family of Calabi-Yau manifolds
$f:\shX\rightarrow \Spec \kk\lfor t\rfor$ whose central fibre is a
union of toric varieties glued along toric strata, and such that in
neighbourhoods of zero-dimensional strata of the central fibre, the
morphism $f$ is, \'etale locally, a monomial morphism between toric
varieties. Here $\kk$ is an algebraically closed field of
characteristic zero.

The main result of \cite{Annals} then produces, given a sufficiently
``nice'' toric degeneration (somewhat akin to the notion of large
complex structure limit), a mirror toric degeneration. This
construction, generalizing a non-Archimedean version carried out for
K3 surfaces by Kontsevich and Soibelman in \cite{KS2}, which in
turn was inspired by the speculative work of Fukaya in \cite{Fukaya},
proceeds by
producing inductively a kind of combinatorial, tropical
\emph{structure} (sometimes called a \emph{scattering diagram}) on
$B$ which encodes ``instanton corrections'' to gluing standard toric
smoothings.  The construction of the mirror, in this case, can be
viewed as giving a tropical hint as to why mirror symmetry has
something to do with counting curves: indeed, the tropical curves on
$B$ contributing to the structure describing the mirror degeneration
$\check\shX\rightarrow\Spec\kk\lfor t\rfor$ can be viewed as
tropicalizations of ``holomorphic disks on the generic fibre of
$\shX\rightarrow\Spec\kk\lfor t\rfor$ with boundary on a fibre of
the SYZ fibration.'' This statement should, of course, be taken with
a grain of salt, but a basic goal is to make such a statement
sensible inside algebraic geometry. The main result of \cite{GPS}
provided moral support for the enumerative interpretation of
structures.

The first author's paper \cite{GBB} in fact implies that this 
construction is at least as strong as the Batyrev--Borisov
construction, and in particular  replicates the Batyrev--Borisov
construction when one starts with natural degenerations of complete
intersections in toric varieties to a union of toric strata. While
it is easy to show the construction of \cite{Annals} applies to more
cases than the Batyrev--Borisov construction, e.g., by considering
certain quotients of complete intersection Calabi-Yau manifolds, it
is not clear how general the construction is. Nevertheless, if one
would like a duality between degenerations which is an involution on
the class of degenerations considered, one is led naturally to
restrict to the case of toric degenerations.

After \cite{Annals}, several different threads converged to suggest
the existence of canonical bases of sections of line bundles on the
constructed families $\check\shX\rightarrow \Spec\kk\lfor t\rfor$.
First, discussions between Mohammed Abouzaid and ourselves led to
the notion of \emph{tropical Morse trees}, discussed in the simple
case of elliptic curves in \cite{Clay}.  These yield a tropical
analogue of the Floer theory of a ``general fibre'' of
$\shX\rightarrow \Spec\kk\lfor t\rfor$, capturing the Floer homology
between certain Lagrangian sections of the putative SYZ fibration.
These sections would be mirror to powers of a canonically defined
relatively ample line bundle $\shL$ on $\check\shX$. Under this
correspondence, one anticipates a canonical basis of global sections
of $\shL^{\otimes d}$ indexed by points of $B({1\over d}\ZZ)$, the
set of points on $B$ with coordinates lying in ${1\over d}\ZZ$. 
Furthermore, multiplication of sections should be described, in
analogy with Floer multiplication, as a sum over trees with two
inputs and one output. This predicts that the homogeneous coordinate
ring of $\check \shX$ can be described directly in terms of tropical
objects on $B$. The motivation for this point of view is explained
in some detail in the expository paper \cite{GStheta}. 

On the other hand, while seeking to understand  Landau-Ginzburg
mirror symmetry for $\PP^2$ in \cite{GP2}, the first author developed the notion
of \emph{broken line}, which was then used by the second author,
working with Carl and Pumperla \cite{CPS}, to describe regular functions
on (non-proper) families $\check\shX\rightarrow\Spec\kk\lfor t
\rfor$ in the context of \cite{Annals}, and in particular to
construct  Landau-Ginzburg mirrors to varieties with effective
anti-canonical bundle.

Using a combination of the above ideas, the first author, working
with Paul Hacking and Sean Keel, gave in \cite{GHK} a general
construction for mirrors to log Calabi-Yau surfaces with maximal
boundary. Specifically, one considers pairs $(Y,D)$ with $Y$ a
rational surface and $D$ an  anti-canonical divisor consisting of a
cycle of $n$ copies of $\PP^1$.  The idea was to use \cite{GPS} to
construct directly from the pair $(Y,D)$ a structure on $B$ (an
affine manifold with singularities homeomorphic to $\RR^2$ being a
kind of ``fan'' for the pair $(Y,D)$, see \S\ref{pairalgebra} for
the construction of $B$ as a cone complex and
\S\ref{scatteringsection}  for the affine structure) which governs
the construction of the mirror. However, $B$ carries one
singularity, at the origin, and is a singularity of a quite
different nature than that which appeared in \cite{Annals}. As a
consequence, the mirror family, which should be a family with
central fibre the \emph{$n$-vertex}, isomorphic to the affine cone
over $D$, cannot be constructed directly as there is no local model
for the smoothing at the vertex of the cone. This was dealt with as follows.
For $A$ an Artin local ring of the form $\kk[Q]/I$, where $Q$ is a 
suitably chosen toric monoid and $I$ a monomial ideal, 
one constructs a deformation $\shU$ over
$\Spec A$ of an open subset of the $n$-vertex. One then constructs
theta functions, the functions defined
using broken lines, on $\shU$.
The theta functions can then be
used to embed $\shU$ into $\AA^n_A$, where one then takes the
scheme-theoretic closure to get a scheme $\shX$ affine over $\Spec
A$. Taking the limit, one obtains the mirror family as a formal
family of affine schemes over the completion of $\kk[Q]$ with
respect to its maximal monomial ideal. So theta functions play a key
role in the construction.  Furthermore, \cite{GHK} gave a formula
for multiplication of theta functions  in terms of trees of broken
lines on $B$ analogous to the formula using tropical Morse trees
suggested in the discussions with Abouzaid.

In forthcoming work \cite{GHKS}, a similar construction is carried
out for K3 surfaces: here one starts with a maximally unipotent,
normal crossings, relatively minimal family $\shX\rightarrow
\Spec\kk\lfor t\rfor$ of K3 surfaces and uses it to construct, using
a combination of the ideas of \cite{Annals} and \cite{GPS}, a
structure on an affine manifold with singularities $B$, homeomorphic
to the two-sphere. This dictates a deformation $\shU$  over $\Spec
A$ of an open subset of a singular union of $\PP^2$'s.  Theta
functions, now sections of an ample line bundle on $\shU$, are used
to embed $\shU$ into $\PP^N_A$ for some $N$, and one compactifies by
taking the scheme-theoretic closure. Here, a multiplication rule for
theta functions can be described precisely by the one originally
suggested in conversations with Abouzaid. This multiplication rule
then describes the homogeneous coordinate ring of the mirror.

Suppose one would like to use similar ideas to construct mirrors in
general. Following these ideas, if one starts with a suitable log
Calabi-Yau pair $(X,D)$ or a suitable degeneration
$\shX\rightarrow\Spec \kk\lfor t\rfor$ of  Calabi-Yau manifolds, the
above discussion suggests three steps:  \begin{enumerate} \item
Construct a structure on $B$, the dual intersection complex of
$(X,D)$ or $\shX_0$ (see \S\ref{pairalgebra}), 
controlling the deformation theory
of an open subset of a singular scheme combinatorially determined by
$B$. This structure is expected to be definable in terms of
enumerative geometry of $X$ or $\shX$; in the case of a log
Calabi-Yau pair $(X,D)$, the structure should be determined by
suitable counts of \emph{$\AA^1$-curves}: morally these are rational
curves meeting the boundary at one point, and defined rigorously via
logarithmic Gromov-Witten invariants \cite{JAMS},
\cite{Chen}, \cite{AC}. 
\item Using broken
lines, construct theta functions on the above open subset. Use these
theta functions to extend the smoothing to an affine or projective
family in the two cases. \item While the second step constructs the
mirror, the multiplication rule on theta functions is also
determined by trees on $B$, and hence the affine or projective
coordinate ring of the mirror family is completely determined by the
structure on $B$. \end{enumerate}

Given (1) and (2), step (3) is straightforward, and is carried out in
general in \cite{GStheta}, \S3.5. However, both broken
lines appearing in step (2) and trees appearing in step (3) are
tropical objects. In particular, the relevant counts of tropical
objects should reflect  some kind of holomorphic curve, or more
precisely, a form of logarithmic stable map. 

In this paper, we will explain that the above philosophy can be used
to construct a mirror in very great generality. In particular, the
logarithmic invariants necessary are generalizations of those
already defined in \cite{JAMS}, \cite{Chen}, \cite{AC},
called \emph{punctured invariants}. These
are currently being developed in \cite{ACGS} jointly with Abramovich and
Chen. In this announcement, we will in fact first skip directly to
step (3), in \S\S\ref{pairalgebra}--\ref{CYmirror}, 
and explain how the affine or homogeneous coordinate rings
can be constructed directly from punctured invariants without the
intervention of steps (1) and (2). However, the first two steps do
give a far more detailed description of the mirror, and in 
\S\ref{scatteringsection} we sketch how the first two steps will also be 
carried out.

In particular, once the correct invariants are defined, one can then
easily write down a description of the coordinate ring to the mirror
of a log Calabi-Yau manifold $(X,D)$ or a maximally unipotent normal
crossings degeneration of Calabi-Yau manifolds. Determining the
mirror in practice remains a difficult task in general, however. 

In fact, the construction discussed here should be viewed as the construction of
a piece of a quantum cohmology ring associated with any (log smooth) pair
$(X,D)$. Following discussions with Daniel Pomerleano, it appears that the
construction we have given in \S\ref{pairalgebra}
should give an algebro-geometric version of $SH^0(X\setminus D)$
(symplectic cohomology) in the case that $(X,D)$ is log Calabi-Yau,
and ongoing work of Ganatra and Pomerleano further
suggests there may be a version of $SH^*$, the quantum cohomology of the pair
$(X,D)$. However, the construction given here only uses the degree $0$ part of
this hypothetical ring. In any event, this fits well with the conjectures of
\S0.5 of the first preprint version of \cite{GHK} concerning the relationship
between symplectic homology and the mirror construction given there. For toric
degenerations of Calabi-Yau varieties the relation between broken lines,
punctured log invariants, symplectic cohomology and the homogeneous coordinate
ring of the mirror was suggested by the second author following discussions with
Mohammed Abouzaid. The case of elliptic curves and their relation to tropical
geometry has been studied in detail by H\"ulya Arg\"uz \cite{Arguez}.

The idea of using punctured log curves to assign algebro-geometric
enumerative meaning to broken lines originates in 2012, after
initial discussions with Abramovich and Chen suggested the existence
of such  invariants (although it took some time to iron out the
details of punctured curves). 

In an alternative approach to these ideas, Tony Yu has been
developing the theory of non-Archimedean Gromov-Witten invariants,
and has used these to interpret broken lines in the case that
$(X,D)$ is a Looijenga surface pair with $D$ supporting an ample
divisor, see \cite{Yu} and references therein. The full development
of non-Archimedean Gromov-Witten theory should allow the
replacement, in the  construction described here, of punctured
invariants with non-Archimedean invariants. The advantage of the
latter is that they are manifestly independent of the choice of
compactification $X\setminus D\subseteq X$ (or of the birational
model of a degeneration $\shX\rightarrow \Spec\kk\lfor t\rfor$). The
advantage of punctured invariants is that they are technically much
easier to define. They also relate to traditional algebraic geometry
more directly and hence should be more amenable to explicit
computations.

Turning to the structure of the paper, in \S\ref{puncturedsection},
we will give a brief overview of logarithmic Gromov-Witten invariants
as defined in \cite{JAMS}, \cite{Chen}, \cite{AC}
and the punctured invariants of \cite{ACGS}. Logarithmic Gromov-Witten
invariants provide a natural way to both talk about Gromov-Witten
theory of degenerations of varieties as well as relative Gromov-Witten
theory with tangency conditions with respect to more complicated
divisors than the usual theory of relative invariants allows 
\cite{Li2}. Furthermore,
punctured log maps can be viewed as a slight further generalization
which allows marked points with a 
``negative order of tangency''  with a divisor.  In
\S\ref{constructionsection},  we then use these to describe the
actual construction. The only detail of the construction not given
here is the proof of associativity of the multiplication operation,
which is quite technical and will be presented in \cite{GSMirror}.

\medskip

\emph{Acknowledgements}: This work would have been impossible without the
long-term collaboration with Paul Hacking and Sean Keel. In particular, Sean
Keel has been lobbying for a long time for a Gromov-Witten theoretic
interpretation for broken lines, which was the starting point for the work
described here. We also thank Dan Abramovich and Qile Chen, who are 
collaborators in the work on punctured Gromov-Witten invariants described here.
In addition, a conversation of the first author with Daniel
Pomerleano led to the realisation that one can most efficiently describe the
mirror construction directly via step (3) above, leading to the exposition given
here. The second author is grateful to Mohammed Abouzaid for discussions in
2012 shaping his view on the role of symplectic cohomology in mirror symmetry
and the relation to punctured invariants. The first author would also like to
thank the organizing committee for the invitation to speak at the AMS Summer
Institute in Algebraic Geometry in Salt Lake City, where these results were
announced.

%=========================================================
%=========================================================

\section{Punctured invariants}
\label{puncturedsection}

%=========================================================
\subsection{A short review of logarithmic geometry}
\label{Subsect: Log geometry}

Punctured invariants are a generalization of logarithmic
Gromov-Witten invariants. They are both based on abstract
logarithmic geometry as introduced by Illusie and K.~Kato
\cite{Illu}, \cite{K.Kato}. While for a more comprehensive survey of
this theory we have to point to other sources such as
\cite{Gbook}, Chapter 3 or \cite{Aetal},
we would like to recall the gist of it.

Log geometry provides a powerful abstraction of pairs consisting of
a scheme $X$ and a divisor $D$ with mild singularities, say normal
crossings. In such a situation one is interested in the behaviour of
functions having zeroes exclusively inside $D$. Such functions can
be multiplied without losing this property, but they cannot be
added. Thus the corresponding subsheaf $\M_{(X,D)}$ of $\O_X$,
sloppily written as $\O_{X\setminus D}^\times\cap \O_X$, is a sheaf of
multiplicative monoids containing $\O^\times_X$ as a subsheaf. The
inclusion defines a homomorphism of monoid sheaves
\[
\alpha_X: \M_{(X,D)}\lra \O_X
\] 
with the property that it induces an isomorphism
$\alpha^{-1}(\O_X^\times)\to \O_X^\times$. With only this structure
it is possible to define the sheaf of differential forms with
logarithmic poles $\Omega_X(\log D)$, the $\O_X$-module locally
generated by $\di f/f$ for $f$ defining $D$ locally.

Now quite generally, a \emph{log structure} on a scheme $X$ is a
sheaf of (commutative) monoids $\M_X$ together 
with a homomorphism of sheaves of
multiplicative monoids $\alpha:\M_X \to\O_X$ inducing an isomorphism
$\alpha^{-1}(\O_X^\times)\to \O_X^\times$. There is an obvious
notion of morphism of log schemes
\[
f: (X,\M_X)\lra (Y,\M_Y),
\]
which apart from an ordinary morphism $f:X\to Y$ of schemes provides
a homomorphism $f^\flat: f^{-1}\M_Y\to \M_X$ compatible with
$f^\sharp: f^{-1} \O_Y\to \O_X$ via the structure morphisms
$f^{-1}\alpha_Y$ and $\alpha_X$.

Now as long as we have a pair $(X,D)$, all abstract constructions can
of course be written directly in terms of functions on $X$. The
point is, however, that log structures have excellent functorial
properties making it possible, for example, to work on a space with
normal crossings as if it were embedded as a divisor in a smooth
space, or even as the central fibre of a semi-stable degeneration,
neither of which may exist.

To explain how this works, we first remark that
given a log space $(Y,\M_Y)$ and a morphism $f:X\to Y$, one can
define the \emph{pull-back log structure} $f^*\M_Y$ on $X$ as the
fibred sum $\O_X^\times\oplus_{f^{-1}\O_Y^\times}  f^{-1}\M_Y$. The
structure map $f^*\M_Y\to \O_X$ is induced by the inclusion
$\O_X^\times\to\O_X$ and by $f^\sharp\circ f^{-1}\alpha_Y: f^{-1}\M_Y\to
\O_X$.

Now let $\shX\to \Spec R$ be a flat morphism of schemes with $R$ a
discrete valuation ring with residue field $\kk$. Then the preimage
of the closed point $0\in\Spec R$, the central fibre $X_0\subseteq
\shX$, is a divisor. We thus obtain a morphism of log schemes
\[
\big(\shX,\M_{(\shX,X_0)}\big)\lra \big(\Spec R,\M_{(\Spec
R,0)}\big),
\]
and by functoriality of pull-back, its restriction to the central
fibre
\[
\big(X_0,\M_{X_0}\big) \lra (\Spec\kk, \M_{(\Spec \kk,0)}\big).
\]
The log structure on $\Spec\kk$ can be described explicitly by
observing that if $t\in R$ is a generator of the maximal ideal, then
any element of $R\setminus\{0\}$ can be written uniquely as $h\cdot
t^n$ with $n\ge 0$ and $h\in R^\times$. Thus the choice of $t$
induces an isomorphism $\M_{(\Spec R,0),0} \simeq\NN\oplus
R^\times$, and the restriction to $0$ yields the \emph{standard log
point} $\Spec\kk^{\dagger}:=(\Spec\kk,\NN\oplus\kk^\times)$ 
with structure morphism
\[
\NN\oplus\kk^\times\lra \kk,\quad
(n, h)\longmapsto\begin{cases} h,&n=0\\
0,&n\neq 0.
\end{cases}
\]
Observe that all but the copy of $\kk^\times$ in $\M_{\kk^\ls}$
maps to $0\in\kk$, reflecting the fact that $h\cdot t^n$ vanishes at
$0$ for $n>0$.

We thus see that a log scheme arising as the central fibre of a
degeneration comes with a morphism to the standard log point. Note
such a morphism to the standard log point is uniquely determined by
the pull-back of the generator $(1,1)\in\NN\oplus \kk^\times$. This
pull-back provides a global section $s_0\in\Gamma(X_0,\M_{X_0})$, which
provides logarithmic information about the deformation $\X\to\Spec
R$ of $X_0$. For example, if $X_0$ is a reduced normal
crossings divisor locally given inside $\X$ by $x_0\cdot\ldots\cdot
x_k=f\cdot t^e$ with $f$ non-vanishing, then the log structure
records $e\in\NN$ and the restriction of $f$ to the singular locus
of $X_0$. Thus a log structure $\M_X$ carries both discrete information,
given by the monoid quotient sheaf $\ol\M_X= \M_X/\O_X^\times$, and
algebro-geometric information from the extension
\[
0\lra \O_X^\times\lra \M_X^\gp \stackrel{\kappa}{\lra} \ol\M_X^\gp\lra 0.
\]
of the associated abelian sheaves. For example, for each section
$\ol m$ of $\ol\M_X^\gp$ over an open set $U\subseteq X$, one obtains
an $\O_U^\times$-torsor $\kappa^{-1}(\ol m)$, or equivalently, the
associated line bundle. In general we write the torsor as $\shL_{\ol m}
^{\times}\subseteq \M_X^{\gp}$ and the associated line bundle as $\shL_{\ol m}$.
Since $\ol\M_X$ has a sometimes subtle
influence on the possibilities of the behaviour of the log
structure, we like to call $\ol\M_X$ the \emph{ghost sheaf} of
$\M_X$.\footnote{More common seems to be the usage of
\emph{characteristic}, but we feel this word is already used too
often in mathematics.}

Without further restrictions log structures can be very pathological, and one
usually restricts to so-called fine log structures defined as follows. If $P$ is
a finitely generated submonoid of a free monoid (a \emph{fine monoid}) then
$Y_P=\Spec \ZZ[P]$ is a (generalized) toric variety, which comes with its
distinguished divisor $D_P$ with ideal generated by all elements of $P$ not
contained in a facet of $P$. We write such finitely generated monoids and monoid
sheaves additively, thinking of its elements as exponents in $\ZZ[P]$. A log
structure is called \emph{fine} if it is locally obtained by pull-back of
$\M_{(Y_P,D_P)}$. Given a log space $(X,\M_X)$, a morphism $f: U\to \Spec
\ZZ[P]$ from an open subspace $U\subseteq X$ together with an isomorphism
$f^*\M_{(Y_P,D_P)}\simeq \M_X|_U$ is called a \emph{chart}. Explicit
computations with fine log structures are all done in charts, hence their
importance. A chart is uniquely determined by the homomorphism of monoids
$P\to\Gamma(U,\M_X)$.

For example, if $X$ is smooth over a field and $D\subseteq X$ is a
divisor with simple normal crossings, with $D$ on $U\subseteq X$ given
by $x_1\cdot\ldots\cdot x_k=0$, then 
\begin{equation}
\label{Eq: nc divisor chart}
\NN^k\lra \Gamma(U,\M_{(X,D)}),\quad
(a_1,\ldots,a_k)\longmapsto x_1^{a_1}\cdot\ldots\cdot x_k^{a_k}
\end{equation}
is a chart for $(X,\M_{(X,D)})$ on $U$.

Caution has to be exercised with the topology, for the Zariski
topology is often too coarse for applications, and one rather works
at least in the \'etale topology. For example, to include in
\eqref{Eq: nc divisor chart} normal crossing divisors with
self-intersections already requires the \'etale topology.

Note that $\ZZ[P]$ is integrally closed only if $P$ contains all
$p\in P^\gp$ with $n\cdot p\in P$ for some $n>0$. Such monoids and
corresponding log structures are called \emph{saturated}. 
For fine log structures the ghost sheaf $\ol\M_X$ is a subsheaf of a
constructible sheaf, its associated sheaf of abelian groups
$\ol\M_X^\gp$. The log structure is called saturated if $\ol\M_X$ is
a sheaf of saturated monoids. We typically work in the category
of fine and saturated (fs) log structures: this is most important
in considerations of fibre products, which are dependent on the particular
subcategory of the category of log schemes being used.

A last concept in log geometry before we can turn to log
Gromov-Witten invariants is \emph{log smoothness}. By definition, a
morphism of log schemes $f:(X,\M_X)\to (Y,\M_Y)$ is smooth if it
fulfills the logarithmic analogue of formal smoothness in scheme
theory (infinitesimal lifting criterion). Under mild assumptions,
this statement is equivalent to saying that a chart
$Q\to\Gamma(V,\M_Y)$ of $Y$ can locally in $X$ be lifted to a chart
$P\to \Gamma(U,\M_X)$ in such a way that 
$f$ is induced by a monoid
homomorphism $Q\to P$ and $X\to Y\times_{\Spec \ZZ[Q]}\Spec \ZZ[P]$
is a smooth morphism of schemes (\cite{K.Kato}, Theorem~3.5 and
\cite{F.Kato}, Theorem~4.1).

Thus a log smooth morphism is the abstraction of a toric morphism.
Remarkably, the underlying morphism need not even be flat, a toric
blowing up being the typical example. An instructional case is also
smoothness of a fine saturated $(X,\M_X)$ over a point $\Spec\kk$
with trivial log structure. Such a \emph{log smooth variety} is
nothing but a \emph{toroidal pair} $(X,D)$ with the divisor
$D\subseteq X$ the support of $\ol\M^\gp_X$.

%=========================================================
\subsection{Logarithmic Gromov-Witten invariants}
\label{Par: Log GW}

Since deformation theory in log geometry is quite analogous to
ordinary deformation theory, it is natural to try to extend
Gromov-Witten theory with target $X$ a smooth variety to
the case with target $(X,\M_X)$ a log smooth variety,
or more generally, work over a base log scheme $(S,\M_S)$ with
$(X,\M_X)\rightarrow (S,\M_S)$ a log smooth morphism. 
Such an extension is in fact of great 
interest, as the logarithmic category naturally captures the tangency
conditions that first appeared in relative Gromov-Witten theory as defined
in \cite{LR}, \cite{IP1}, \cite{IP2} and \cite{Li2}. In fact, the second 
author of this paper 
suggested in \cite{STalk} that logarithmic Gromov-Witten theory was
the natural context for thinking about relative invariants. Since then
a full theory has been developed by ourselves \cite{JAMS} and 
also, building on \cite{STalk}, by Abramovich and Chen \cite{AC},
\cite{Chen}. A theory which serves many of the same purposes but is
more suitable for the symplectic category has also been developed 
independently by Brett Parker in \cite{Park1}, \cite{Park2}.

To define logarithmic Gromov-Witten theory,
one simply adds the prefix \emph{log-}
to all spaces and morphisms in the definitions and hopes this 
works. This is almost the case, but it is a little more subtle and
interesting because an ordinary stable map may support many log
structures that are trivially related by base change. The way out is
to pick a universal one. The universal choice is detected purely on
the level of ghost sheaves, and this correspondence turns out to be
closely related to tropical geometry. We now explain this story in
more detail.

Gromov-Witten theory is based on stable maps, whose domains are
nodal curves. The logarithmic geometry of nodal curves and their
moduli spaces indeed already contains a number of crucial aspects of
the theory \cite{Kato 2000}. We thus start discussing these first.
Given a nodal curve $C$ over a separably closed field $\kk$, the log
smooth enhancements $f:(\Spec C,\M_C)\to (\Spec \kk, \M_{\Spec\kk})$
can be classified easily. First, $\M_{\Spec\kk}= Q\oplus\kk^\times$
with $Q$ a fine monoid with $Q^\times=\{0\}$, turning
$\Spec\kk$ into a \emph{log point}. Log smoothness implies that at a
generic point of $C$ the morphism $f^*\M_{\Spec \kk}\to \M_C$ is an
isomorphism. This property hence fails at finitely many points,
necessarily at all singular points $q\in C$ and possibly also at
some smooth points $p\in C$.

At a smooth special point $p\in C$ the only possibility for a smooth
morphism of log structures is given by the morphism of charts $Q\to
Q\oplus\NN$ with the chart $Q\oplus \NN\to \Gamma(U,\M_C)$ defined
by $f^\flat$ on $Q$ and mapping $1\in\NN$ to some local section
$\sigma$ of $\M_C$ with $\alpha_C(\sigma)=z$ a generator of
$\maxid_p\subseteq\O_{C,p}$. Note there is no freedom of $\M_C$ coming
from the choice of chart because the second summand simply generates
$\alpha_C(\M_C)\subseteq\O_C$. Thus $p$ is nothing but a marked point
of $C$, unlabelled for now.

At a node $q\in C$ smoothness implies a chart for $f$ of the
interesting form
\[
Q\lra Q\oplus_\NN \NN^2,\quad
q\longmapsto (q,0),
\]
where the map $\NN\to\NN^2$ is the diagonal morphism $1\mapsto (1,1)$ and
$\NN\to Q$ defines an element $\rho_q\in Q\setminus\{0\}$. Thus $Q\oplus_\NN
\NN^2$ is generated by $Q$ and by two more elements $e_1=\big(0,(1,0)\big)$,
$e_2=\big(0,(0,1)\big)$ with single relation $\rho_q=e_1+e_2$. The images of
$e_1$, $e_2$ in $\M_{C,q}$ map to the defining equations $x,y\in\O_{C,q}$ of the
two branches of $C$ at $q$ under $\alpha$, and this property determines the
corresponding sections $\sigma_x,\sigma_y\in \M_{C,p}$ up to invertible
functions. The meaning of this chart is that the node $q\in C$ looks as if it is
embedded in the deformation $\Spec \kk[Q][x,y]/(xy-t^{\rho_q})\to \Spec\kk[Q]$.
Thus each node determines an element $\rho_q\in Q$ which records something like
a virtual speed of smoothing of the node under variations of the base point.
Note that if $q$ is a self-intersection point of an irreducible component of
$C$, the chart has to be understood in the \'etale topology.

It is then straightforward to generalize the notion of log smooth curve to
families, arriving at a stack $\tilde\scrM$. In contrast to the classical case,
fixing the degree, numbers of marked points and imposing stability (finiteness
of automorphism group) is not enough to yield an open substack of finite type.
The reason is that any homomorphism of monoids $\varphi: Q\to Q'$ with the
property $\varphi^{-1}(0)=0$ defines a morphism of log points $(\Spec
\kk,Q'\oplus\kk^\times)\to (\Spec\kk, Q\oplus\kk^\times)$. Base change of a log
smooth curve $(C,\M_C)\to (\Spec\kk, Q\oplus\kk^\times)$ then defines another
smooth structure $(C,\M'_C)\to (\Spec\kk,Q'\oplus\kk^\times)$.

Given a nodal curve $C$ over $\kk$ there is, however, always a
universal (\emph{minimal} or \emph{basic}) 
log structure by taking $Q=\NN^e$ with $e$ the
number of nodes and $\rho_q=e_q$ the corresponding generator. Any
other log smooth enhancement of $C\to\Spec\kk$ is then obtained by a
unique pull-back, see \cite{Kato 2000}, Proposition~2.1. Restricting
to families with all geometric fibres carrying the universal log
structure defines an open substack $\scrM\subseteq\tilde\scrM$. Once
stability is imposed, the proper connected components can be
identified with the smooth Deligne-Mumford stacks of ordinary stable
curves $\mathbf M_{g,k}$. From this perspective we can also
understand the universal log structure quite easily. Singular curves
define a divisor with normal crossings $\mathbf D_{g,k} \subseteq
\mathbf M_{g,k}$. The universal log structure on a family of stable
curves is the pull-back by the morphism to $\scrM_{g,k}$ of
$\M_{(\mathbf M_{g,k}, \mathbf D_{g,k})}$.
\bigskip

Now that we have a fair understanding of the domains, we can turn to
stable log maps. Given a log space $(X,\M_X)$, we define a \emph{stable
log map} over a scheme $W$ and with target $(X,\M_X)$, to be a log smooth
curve $(C,\M_C)\to (W,\M_W)$ for some log structure $\M_W$ on $W$
together with a morphism of log spaces
\[
(C,\M_C)\lra (X,\M_X).
\]
Similarly to the case of curves, we now obtain an algebraic stack
$\tilde\scrM(X,\M_X)$ of stable log maps. This stack is too big for
the same reason that $\tilde\scrM$ is and we rather have to find an
open substack of stable log maps with a minimality
property, which we call basicness.
We follow the exposition of \cite{JAMS}, but basicness has
been studied in much greater generality more recently in \cite{Wise}.
There are a few essential insights regarding the issue of basicness.

First, since basicness should be an open property, it is enough to
define it on geometric points, that is, for a stable log map over an
algebraically closed field.

Second, a stable map $(f:C\to X, \mathbf x)$ with $\mathbf x$ the
tuple of marked points, defines a log structure $f^*\M_X$ on $C$ that
typically is not the log structure of a log smooth curve. For
example, the rank of $f^*\ol\M_X$ may jump on the smooth locus of
$C$, contradicting strictness of $(C,\M_C)\to (\Spec\kk,
Q\oplus\kk^\times)$ on this locus. Then the question of basicness
boils down to finding a log structure $\M_C$ on $C$ such that (i)~it
has a log smooth morphism $(C,\M_C)\to (\Spec\kk,
Q\oplus\kk^\times)$ for some $Q$, (ii)~there is a morphism of log
structures $f^*\M_X\to \M_C$ and (iii)~$\M_C$ is minimal with the
properties~(i) and (ii). Of course, given $f^*\M_X$, no such $\M_C$
may exist, which just means that $(f:C\to X, \mathbf x)$ is not in
the image of the forgetful morphism $\tilde\scrM(X,\M_X)\to \mathbf
M(X)$ to the stack of ordinary stable maps.

The third insight is that basicness can be checked at the level of ghost
sheaves. The reason is that given a log structure $\M_C$ on the domain curve $C$
and a morphism $\ol\M\to \ol\M_C$ of sheaves of fine monoids, then $\M:=
\ol\M\times_{\ol\M_C} \M_C$ is again a log structure on $C$. Now if there exists
any log enhancement $(C,\M_C)\to (X,\M_X)$ of a given stable map $(C\to X,
\mathbf x)$ and $\ol\M$ is the universal ghost sheaf for such log morphisms,
then $(C,\M= \ol\M\times_{\ol\M_C} \M_C)\to (X,\M_X)$ is the desired basic log
enhancement.

Just as for the domain we have three types of charts which readily
describe the maps on the level of ghost sheaves. Let $f:(C,\M_C)\to
(X,\M_X)$ be a stable log map over the log point $(\Spec \kk,
Q\oplus\kk^\times)$ with $\kk$ an algebraically closed field. For a
scheme-theoretic point $y\in C$ denote $P_y= \ol\M_{X,f(y)}$.

\begin{enumerate}
\item[(I)]
At a \emph{generic point} $\eta\in C$ we have $\ol\M_{C,\eta}= Q$
and hence a homomorphism
\[
V_\eta: P_\eta\lra Q.
\]
\item[(II)]
At a \emph{marked point} $p\in C$ we have $\ol\M_{C,p}= Q\oplus \NN$
with the projection to the first factor the generization map
$\ol\M_{C,p}\to \ol\M_{C,\eta}$ to the generic point $\eta$ with $p$
in its closure. Thus the composition $P_p\to \ol\M_{C,p}$ with the
projection to $Q$ agrees with the generization map $P_p\to P_\eta$ on
$X$ composed with $V_\eta$. The additional data at $p$ is the composition
with the projection to $\NN$, defining a homomorphism
\[
u_p: P_p\lra \NN.
\]
\item[(III)]
The most interesting data is at a \emph{node} $q\in C$. Then
$\ol\M_{C,q}= Q\oplus_\NN\NN^2$ with the fibred sum defined by the
relation $(\rho_q,(0,0)) = (0,(1,1))$ in $Q\oplus\NN^2$. The
generization maps $\ol\M_{C,q}\to \ol\M_{C,\eta_i}$ to the generic
points $\eta_1,\eta_2$ of the two branches of $C$ at $q$ are given
by embedding $Q\oplus_\NN \NN^2$ into $Q\oplus Q$ via
\[
\big(m,(a,b)\big)\longmapsto (m+a\rho_q, m+b\rho_q),
\]
and projecting to one of the factors. Thus
\[
P_q\longrightarrow Q\oplus_\NN \NN^2\subseteq Q\oplus Q
\]
equals the pair of compositions of the generization maps $\chi_i:
P_q\to P_{\eta_i}$ with $V_{\eta_i}$, $i=1,2$. Since $(m_1,m_2)\in
Q\oplus Q$ lies in the image of $Q\oplus_\NN \NN^2$ iff $m_1- m_2\in
\ZZ\rho_q$, we derive the existence of a homomorphism
\[
u_q: P_q\lra \ZZ
\]
fulfilling the important equation
\begin{equation}
V_{\eta_1}\circ \chi_1 - V_{\eta_2}\circ\chi_2 = u_q\cdot\rho_q.
\end{equation}
Note that the sign of $u_q$ depends on an ordering of the branches
of $C$ at $q$. Moreover, $u_q$ is determined uniquely by this
equation since $\rho_q\neq 0$.
\end{enumerate}

 From this description it is not hard to see that for any stable log
map over a (separably closed) field, there is a universal choice of
monoid $Q$, $\ol\M_C$ and morphism $f^*\ol\M_X\to \ol\M_C$ of the
desired form. The base monoid $Q$ can be defined as an explicit
quotient of $\prod_{\eta\in C} P_\eta\times\prod_{q\in C} \NN$, see
\cite{JAMS}, Equation~(1.14). We skip the formula and rather give an
interpretation in terms of tropical geometry (\cite{JAMS},
Remark~1.18) in \S\ref{Par: tropical} below.

In any case, with the characterization of basicness via the notion
of stable log maps with basic log structure, we arrive at an open
substack $\scrM(X,\M_X)\subseteq \tilde\scrM(X,\M_X)$, which, if $X$
is proper over the base field, is a proper
Deligne-Mumford stack over the base field once the degree and
numbers of marked points are bounded (\cite{JAMS}, Theorem~0.2,
\cite{ACMW}, Theorem 1.1.1).
If in addition $(X,\M_X)$ is (log-) smooth over the base field then
$\scrM(X,\M_X)$ comes with a virtual fundamental class with the expected
properties. The corresponding intersection theoretic numbers are our
log Gromov-Witten invariants.

A straightforward generalization works relative a fixed log scheme
$(S,\M_S)$. Then for $(X,\M_X)$ smooth and proper over 
$(S,\M_S)$ one obtains a
Deligne-Mumford stack $\tilde\scrM((X,\M_X)/(S,\M_S))$ of stable log
maps over $(S,\M_S)$ that is proper over $S$ and hence the corresponding
virtual fundamental class and log Gromov-Witten invariants.

An important aspect of the theory is the discrete logarithmic data
$u_p$ at the marked points. Unlike the other discrete data $V_\eta$,
$u_q$, $\rho_q$ determining the basic monoid and map of ghost
sheaves, $u_p$ can be fixed once the marked points are labelled. To
understand the meaning of $u_p$ consider the situation of a toroidal
pair $(X,D)$ with $\M_X= \M_{(X,D)}$ the divisorial log structure
and the component of $C$ containing $p$ not mapping into $D$. If
$f(p)$ lies in only one irreducible component of $D$ then $D$ is
Cartier at $f(p)$, say defined by $h=0$. Then $h$ generates
$P_p=\NN$, and the map $f^\flat: \M_{X,f(p)}\to \M_{C,p}$ is
determined by $f^\sharp: \O_{X,f(p)}\to \O_{C,p}$, $h\mapsto g\cdot
z^{u_p(1)}$, $g\in \O_{C,p}^\times$, $z$ a local uniformizer at $p$. 
Thus $u_p$ records the
\emph{contact order} of $f$ with $D$ at $p$. It is one merit of log
geometry that this contact order still makes sense for marked points
on components mapping into $D$. 

If $D$ has several irreducible components $D_\mu$ containing $p$,
they may not be individually Cartier, but $u_p$ records the contact
order with respect to any linear combination $\sum_\mu a_\mu D_\mu$,
$a_\mu\ge0$, that is Cartier at $p$.

To set up a log Gromov-Witten counting problem, one needs to specify both
the contact orders described above and degree data. We specify
contact orders at each marked
point by selecting a stratum $Z\subseteq X$ and compatible
choices of homomorphisms $P_x\to \NN$ for any $x\in Z$. In other
words, we take a section $s\in\Gamma(Z, (\ol\M_X^\gp)^*)$ to fix
$u_p$. We insist this choice of contact order is maximal by requiring 
that $s$ does not extend to
any larger closed subset of $X$.\footnote{We assume the log
structure on $X$ to be defined in the Zariski topology to avoid
subtle points related to monodromy in $\ol\M$. However, \cite{Wise}
removes this condition.}
We call such data \emph{maximal contact data}. 
In particular, we can now define a \emph{class} $\beta$ of stable
log map. Such a class consists of data the curve
class $\ul{\beta}\in H_2(X,\ZZ)$, the genus $g$ of the curve,
a number $n$ of marked points, and
a choice of maximal contact data as above for each marked point. Then the
substack $\scrM_{\beta}(X,\M_X)$ of $\scrM(X,\M_X)$ consisting of
stable log maps realising the homology class $\ul{\beta}$, of the given
genus and number of marked points, and contact order at the marked points
given by the sections, is in fact an open and closed substack of
$\scrM(X,\M_X)$. 

\begin{example}
\label{simpleexample}
As a simple example, let $X=\PP^2$ with its toric log structure
defined by $D=D_0\cup D_1\cup D_2$, the union of coordinate lines
$D_\mu= (Z_\mu=0)$. Consider the log Gromov-Witten count for genus
$g=0$ and three marked points $p_1, p_2, p_3$ mapping to the strata
$D_1\cap D_2$, $\PP^2$ and $D_0$, respectively. One has
$\Gamma(D_1\cap D_2, (\ol\M_X^\gp)^*)= \ZZ^2$, $\Gamma(\PP^2,
(\ol\M_X^\gp)^*)= 0$ and $\Gamma(D_0, (\ol\M_X^\gp)^*)=
\ZZ$; we take $u_{p_1}=(1,1)$, $u_{p_2}=0$, $u_{p_3}=1$ and look at
curves of degree one. If $\beta$ symbolizes the given choice of
discrete data, the corresponding moduli space $\scrM_\beta(X,\M_X)$
of stable log maps is isomorphic to the blowing up $\Bl_x
\PP^2$ of $\PP^2$ at $x=D_1\cap D_2$. The map
$\scrM_\beta(X,\M_X)\to \PP^2$ is given by evaluation at $p_2$.

Indeed, for $p_2\not\in D_1\cap D_2$ there is a unique line
$\ell\simeq \PP^1$ through $p_2$ and $D_1\cap D_2$, and the
pull-back $f^\sharp$ of functions readily defines $f^\flat:
f^*\M_X\to \M_C$. This is a curve without nodes and trivial monoid
$P_\eta=0$ at the generic point. Hence the minimal log structure has
$Q=0$, so it is a stable log map defined over the trivial log point
$\Spec\kk$. Now fixing the image $\ell$ and letting $p_2$ approach
$D_1\cap D_2$, the limit is a stable log map with two irreducible
components $C_1$, $C_2$. The universal base monoid turns out to be
$\NN$, so this is a stable log map over the standard log point. The
first component $C_1$ maps isomorphically to $\ell$ and has one
marked point $p_3$ mapping to $D_0$, while $C_2$ is contracted to
the intersection point $x=D_1\cap D_2$ and has two marked points
$p_1,p_2$. Local equations $z_1=0$, $z_2=0$ of $D_1$ and $D_2$ at
$x$ can be viewed as generators of $\M_{\PP^2,x} \subseteq
\O_{\PP^2,x}$. The choice of $u_{p_2}$ says that
$f^\flat(z_i)$ both map to $(1,0)\in
\ol\M_{C,p_2}=Q\oplus\NN$. Hence $f_{p_2}^\flat(z_1) = h\cdot
f_{p_2}^\flat(z_2)$ for some $h\in\O^\times_{C_2,p_2}$. The value
$h(p_2)$ then tells the direction that $\ell$ passes through $x$,
that is, the point in the fibre of $\Bl_x\PP^2$ over $x$. A complete
analysis also has to address the special situations of $\ell\subseteq
D_1\cup D_2$, but this situation does not lead to any additional
refinement, other than increasing the size of the base monoid.
\end{example}

%=========================================================
\subsection{The tropical interpretation}
\label{Par: tropical}

For any logarithmic point $(\Spec\kk, Q\oplus\kk^\times)$, the
moduli space of log morphisms
\[
(\Spec\kk, \NN\oplus\kk^\times) \lra (\Spec\kk, Q\oplus\kk^\times)
\]
from the standard log point is non-empty. Its connected components
are labelled by monoid homomorphisms $\psi: Q\to\NN$ with
$\psi^{-1}(0)=0$, and each is a torsor under the multiplicative
action of $\Hom(Q,\GG_m)$. Thus we can probe the moduli space of
stable log maps $\scrM(X,\M_X)$ by considering stable log maps over
the standard log point. This point of view also provides the link to
tropical geometry.

First, to any fine log space $(X,\M_X)$ in the Zariski topology one can
functorially associate a polyhedral complex $\Trop(X,\M_X)$. This was
carried out in \cite{JAMS}, Appendix B, with a refinement given by
Ulirsch in \cite{Ulirsch}, \S 6,  to handle correctly the tropicalization of
a general fine log scheme or stack as a generalized cone complex.

Here we give the construction by identifying rational
polyhedral cones along common faces as follows. For each
scheme-theoretic point $x\in X$ define the rational polyhedral cone
$C_x= \ol\M_{X,x}^\vee$, where for a monoid $Q$ we write $Q^\vee=
\Hom(Q,\RR_{\ge 0})$. Then if $x\in\cl(y)$, the generization map
$\ol\M_{X,x}\to \ol\M_{X,y}$ identifies $C_y$ with a face of $C_x$.
Define $\Trop(X)=\varinjlim C_x$, the colimit taken over all
scheme-theoretic points in $X$ partially ordered by specialization.
Note that if the log structure is only defined in the \'etale
topology, monodromy issues may lead to self-identifications of $C_x$
and it is better to think of $\Trop(X)$ as a diagram in the category
of polyhedral cones with arrows being inclusions of faces. 

\begin{example}
\label{Expl: Trop(toric)}
Let $X$ be a toric variety defined by a fan $\Sigma=\{\sigma\}$ in
$N\simeq\ZZ^n$ and endowed with the log structure $\M_X$ defined by
its toric divisor $D\subseteq X$. If $x\in X$ lies in the interior of
the closed toric stratum defined by $\sigma\in\Sigma$, then
$\ol\M_{X,x}= \Hom(\sigma\cap N,\NN)$. Hence $\Trop(X)=
\varinjlim_{\sigma\in\Sigma} \sigma=\Sigma$.

Note however that $\Trop(X)$ does not contain information on the
embedding of its cones into the fixed vector space $N_\RR$. This
embedding reflects the torus action on $X$, which is irrelevant in
the construction of $\Trop(X)$.
\end{example}

By functoriality, the tropicalization of a stable log
map $(C,\M_C)\to (X,\M_X)$ over $(W,\M_W)$ yields two maps of cone
complexes
\[
\Trop(C,\M_C)\lra \Trop(X,\M_X),\quad
\Trop(C,\M_C)\lra \Trop(W,\M_W).
\]
Each pull-back to a standard log point $(\Spec\kk,\kk^\times\oplus\NN)\to
(W,\M_W)$ leads to restriction of these maps to the fibre over the image of
$\Trop(\Spec\kk,\kk^\times\oplus\NN) =\RR_{\ge0}$. Of course, this situation is
entirely described by the fibre over $1\in\RR_{\ge0}$, which is a complex of
polyhedra rather than cones.

\begin{example}\label{Expl: tropical curves}
(\emph{Traditional tropical curves from the toric case}) To make the contact of
this point of view to traditional tropical geometry, see e.g.~\cite{Mik}, let
$(X,\M_X)$ be a toric variety defined by a fan $\Sigma$ in $N_\RR$ as in
Example~\ref{Expl: Trop(toric)}. Let $\tilde X= X\times\AA^1$ be the trivial
degeneration with its toric log structure and viewed as a log space over
$(\AA^1, \M_{(\AA^1,0)})$. Note that $\Trop(\tilde X,\M_{\tilde X})=
N_\RR\times\RR_{\ge0}$ with the cell decomposition given by
$\sigma\times\RR_{\ge0}$ and $\sigma\times\{0\}$, $\sigma\in\Sigma$. Now
consider a stable log map over the standard log point to $(\tilde X,\M_{\tilde
X})$ over $(\AA^1,\M_{(\AA^1,0)})$, i.e., a commutative diagram
\begin{equation}
\label{commdiagram}
\xymatrix@C=30pt
{(C,\M_C)\ar[r]^{f}\ar[d]& (\tilde X,\M_{\tilde X})\ar[d]\\
(\Spec\kk, \NN\oplus\kk^{\times})\ar[r]&(\AA^1,\M_{(\AA^1,0)})
}
\end{equation}
The map $(\Spec\kk,\NN\oplus\kk^{\times})
\rightarrow (\AA^1,\M_{(\AA^1,0)})$
is given by mapping $\Spec\kk$ to $0\in\AA^1$ and
mapping the toric coordinate $t$ on
$\AA^1$ to $(b,1)\in \NN\oplus \kk^\times$ for some $b>0$. Tropicalizing
this diagram, one can view the result as describing the cone
over a traditional tropical curve in $N_\RR$. Indeed, for each
generic point $\eta\in C$ the tropicalization now is nothing but
multiplication by $V_\eta$:
\begin{eqnarray*}
h_\eta: \RR_{\ge0}= \ol\M_{C,\eta}^\vee&\lra& N_\RR\times\RR_{\ge0},\\
\lambda&\longmapsto& \lambda\cdot V_\eta\in \ol\M_{\tilde X,f(\eta)}^\vee
= \sigma\times\RR_{\ge 0}\subseteq N_\RR\times \RR_{\ge 0}.
\end{eqnarray*}
Here $\sigma\in\Sigma$ labels the smallest stratum of $X$ containing
$f(\eta)$. The composition of $h_\eta$ with the projection
$N_\RR\times\RR_{\ge0} \to \RR_{\ge0}$ is multiplication by $b$.

For a double point $q\in C$ we have $\rho_q\in\NN$ and hence
$\ol\M_{C,q}^\vee\simeq \RR_{\ge0}\cdot([0,\rho_q]\times\{1\})$, a
cone over an interval of length $\rho_q$. Denoting this cone
$K_{\rho_q}$ and letting $\sigma\in\Sigma$ label the smallest
stratum of $X$ containing $f(q)$, the tropicalization of $f$
at $q$ defines the map of cones
\[
h_q: K_{\rho_q}\lra  \ol\M_{\tilde X,f(q)}^\vee
= \sigma\times\RR_{\ge0}\subseteq N_\RR\times \RR_{\ge 0}.
\]
The restriction of $h_q$ to the two rays forming $\partial
K_{\rho_q}$ is $h_{\eta_1}$, $h_{\eta_2}$ for $\eta_i$ the generic
points of the two branches of $C$ at $q$.

Finally, a marked point $p\in C$ with tangency condition $u_p\in
N\oplus\NN$ yields the map
\begin{eqnarray*}
h_p:\RR_{\ge0}^2= \ol\M_{C,p}^\vee&\lra&
\sigma\times\RR_{\ge0}\subseteq N_\RR\times \RR_{\ge0},\\
(\lambda,\mu) & \longmapsto  & \lambda\cdot V_\eta+ \mu\cdot u_p.
\end{eqnarray*}
Again $\sigma\in \Sigma$ labels the smallest
stratum containing $f(p)$. Note that a non-logarithmic (traditional) incidence
condition at $p$ leads to $u_p=0$ and makes $h_p$ contract the
submonoid $\{0\}\times\RR_{\ge0}\subseteq \RR_{\ge0}^2$.

Now assume that the tangency conditions $u_p$ lie in $N\subseteq
N\oplus\NN$, that is, they are pulled back from $X$. Then the
restriction of $\Trop(f)$ to the fibre over $1\in\RR_{\ge0}$ as
described before is a traditional tropical curve in $N_\RR$. The
vertices are in bijection with the irreducible components of $C$,
the bounded edges correspond to double points and unbounded edges
are given by marked points. The balancing condition at a vertex
$\eta\in C$ comes from triviality of certain $\O_C^\times$-torsors
related to the log morphism and is special to the toric case, see
\cite{JAMS}, Proposition~1.15 and Example 7.5, as well as earlier work
\cite{NS}.
\end{example}

In the non-toric situation of a log scheme $(X,\M_X)$ over the
standard log point, the tropicalization of a stable log map
can still be viewed as a tropical curve, but the balancing condition
at a vertex only contains information along the given stratum and
is inhomogeneous with a correction term determined by the
underlying map of schemes $C\to X$, see \cite{JAMS}, Proposition~1.15.

\begin{example}
\label{simpleexample2}
Returning to Example \ref{simpleexample}, consider the curve $C$ with
two irreducible components, $C=C_1\cup C_2$, described there, with three
marked points $p_1,p_2,p_3$, and $p_3\in C_1$, $p_2,p_3\in C_2$. The
stable log map described in Example \ref{simpleexample} to
$\PP^2$ also describes, after taking the product of $(C,\M_C)$ with
the standard log point and replacing $\PP^2$ with $\PP^2\times\AA^1$,
a stable log map with target space $\PP^2\times\AA^1$. In particular,
the curve $(C,\M_C)$ is defined over $(W,\M_W)=(\Spec\kk, \NN^2
\oplus\kk^{\times})$, where the first factor in $\NN^2$ comes from the
standard log point factor and the second comes from the base monoid
$\NN$ appearing in Example \ref{simpleexample}. One can make a base-change
$(\Spec\kk, \NN\oplus\kk^{\times})\rightarrow 
(\Spec\kk, \NN^2\oplus
\kk^{\times})$ given by the monoid homomorphism $\NN^2\rightarrow \NN,
(a_1,a_2)\mapsto a_1\ell_1+a_2\ell_2$ for some positive integers
$\ell_1,\ell_2$. In this way one
obtains a diagram \eqref{commdiagram} with $\tilde X=\PP^2\times\AA^1$
and such that the restriction of $\Trop(f)$ to the fibre over
$1\in \Trop(\Spec\kk, \NN\times\kk^{\times})$ is a tropical curve
in $\RR^2$, depicted in Figure \ref{tropicalcurve}. 
Here $\ell=\ell_2/\ell_1$ determines the length of
the edge corresponding to the node of $C$.

\begin{figure}
\input{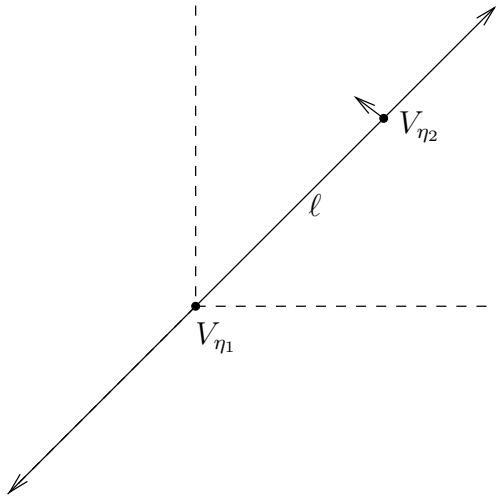}
\caption{The tropical curve associated to the stable log map of Example
\ref{simpleexample2}. Here the dotted lines are rays in the fan for $\PP^2$, 
while the diagonal line in the direction $(-1,-1)$ is both a ray in the
fan for $\PP^2$ and part of the tropical curve. The domain tropical curve
has three unbounded edges corresponding to $p_1, p_2$ and $p_3$, but the
edge associated to $p_2$ is contracted, depicted here by the short arrow. 
}
\label{tropicalcurve}
\end{figure}
\end{example}

We can now give a tropical interpretation of the basic monoid $Q$.
Namely, $Q^\vee$ can be identified with the moduli space of tropical
curves with fixed combinatorial data. In this definition we admit
real variations of the vertices inside the given cones of
$\Trop(X)$, but fix the directions of the edges. In particular,
the balancing condition does not play a role in the definition of
$Q^\vee$ once \emph{one} tropical curve in the deformation class
arises as the tropicalization of a stable log map over the standard
log point. Note that only those tropical curves respecting the
integral structure actually arise in this fashion, the others arise
from $\Hom(Q,\RR_{\ge0})$ and have a direct geometric interpretation
only in non-archimedean geometry.

%=========================================================
\subsection{Punctured invariants}
\label{Par: punctured invts}

As a motivation for punctured Gromov-Witten invariants and the process of
``puncturing'', consider a log smooth curve over the standard log point
$\tilde\pi:(\tilde C,\M_{\tilde C})\to (\Spec\kk, \NN\oplus\kk^\times)$ with two
irreducible components $\tilde C= C\cup C'$ intersecting in one node $q\in
\tilde C$ and with $\rho_q=\ell$. Thus there are $\zeta,\omega,\tau\in
\M_{\tilde C,q}$ with $\tau=\tilde\pi^\flat(1,1)$ and single relation
$\zeta\omega=\tau^\ell$ and such that any $\sigma\in \M_{\tilde C,q}$ can be
written uniquely as $\sigma= h\cdot \zeta^a\omega^b \tau^c$ with
$h\in\O^\times_{\tilde C,q}$ and $0\le c <\ell$. Moreover, we can arrange
$z=\alpha_{\tilde C}(\zeta)$, $w=\alpha_{\tilde C}(\omega)$ to be local
uniformizers of $C,C'$ at $q$, respectively.

Now consider the log structure $\M_C$ on $C$ over the standard log
point defined by restriction of $\M_{\tilde C}$ to $C$. At a point
$x\in C\setminus \{q\}$ the composition $(C,\M_C)\to (\tilde
C,\M_{\tilde C})\to (\Spec \kk, \NN\oplus \kk^\times)$ is a strict
log morphism as before. At $x=q$ denote the restrictions of
$\zeta,\omega,\tau$ to $C$ by the same symbol. Then we can still
write any $\sigma\in \M_{C,q}$ uniquely as $\sigma= h\cdot
\zeta^a\omega^b \tau^c$ with $h\in\O^\times_{C,q}$ and $0\le c
<\ell$. But now $\omega$ maps to $0\in\O_{C,q}$ because $w|_C=0$.

In $\M_{C,q}^\gp$ we can nevertheless write $\omega= \zeta^{-1}
\tau^{\ell}$. Eliminating $\omega$ we arrive at a description of $q$
analogous to a marked point as follows. Write $\NN\oplus_\NN \NN^2$
defined by $\rho_q=\ell\in\NN\setminus\{0\}$ and $(1,1)\in\NN^2$, as the submonoid
\[
S_\ell = \big(\RR_{\ge0}\cdot (-1,\ell)+\RR_{\ge0}\cdot(1,0)\big )\cap\ZZ^2,
\]
of $\ZZ^2$ with generators $(-1,\ell), (0,1), (1,0)$. Note that $S_\ell^\vee=
K_\ell$ from Example~\ref{Expl: tropical curves}. The log structure
of $(C,\M_C)$ at $q$ is then defined by
\[
S_\ell\lra \O_{C,q},\quad
(a,b)\longmapsto \left\{\begin{array}{ll} z^a,&b=0\\ 0,&
b>0.\end{array} \right.
\]
This description does not depend on anything but a marked point on
$C$ (here, $q$) and $\ell\in\NN\setminus\{0\}$. 
%Note that putting
%$\ell=0$ would retrieve the ordinary log structure at a marked point.

Conversely, starting from a marked point $p$ on a log smooth curve
$(C,\M_C)$ over a standard log point, for any $\ell\in\NN$ there is
a log structure $\M_C^\ell$ with $\M_C\subseteq\M_C^\ell\subseteq 
\M_C^\gp$ as follows. Take $\zeta,\tau\in\M_{C,p}$ to be generators up
to $\O_{C,p}^\times$ as before. Then define $\M_C^\ell$ to  agree
with $\M_C$ away from $p$, while $\M_{C,p}^\ell$ is  generated by
$\zeta^a\tau^c$ with $c\ge 0$, $\ell a+c\ge 0$ and with structure homomorphism
\[
\alpha_C^\ell(\zeta^a \tau^c)= \left\{ \begin{array}{ll} z^a,&c=0\\
0,&c>0.\end{array}\right.
\]
For a log smooth curve over the standard log point we have thus
defined a notion of \emph{puncture} at any marked point $p\in C$,
depending on the choice of $\ell\in\NN\setminus\{0\}$. It is
designed to admit a log morphism $(C,\M_C^\ell)\to (\tilde
C,\tilde\M_C)$, a typical example of a \emph{punctured stable map}.

More generally, for any $r,s\in\NN_{>0}$ we can embed the nodal curve $xy=0$ as
boundary divisor into the two-dimensional affine toric variety $\Spec
\CC[P_{r,s}]$ with $P_{r,s}= \big(\RR_{\ge0}\cdot(-r,s)+\RR_{\ge0}\cdot
(1,0)\big) \cap \ZZ^2$. Restriction to the component with coordinate defined by
$(1,0)\in P_{r,s}$ then produces a log structure on $\AA^1$ with a morphism to
the standard log point defined by $(0,1)\in P_{r,s}$, which is strict except at
one special point $p=0$ with monoid $P_{r,s}$. A universal choice is obtained by
taking the direct limit of $P_{r,s}$ over all $(r,s)$, ordered by inclusion,
that is, by the slope $r/s$. The governing monoid at $p$ is then
\[
\varinjlim_{(r,s)} P_{r,s} = \big\{(a,b)\in \NN\oplus\ZZ\,\big|\,
b=0 \Rightarrow a\ge 0\big\}.
\]
For the puncturing of a curve at a smooth point $p$, this definition generalizes
to arbitrary base monoids $Q$ giving
\[
\ol\M_{C,p}^\circ= \big\{(a,b)\in Q\oplus\ZZ\,\big|\,
b=0 \Rightarrow a\ge 0\big\}
\]
A slight reason for discomfort with the universal punctured log
structure $\M_C^\circ$ is that $\ol\M_{C,p}^\circ$ is not a finitely
generated monoid. In the application to punctured stable maps, this
in fact never matters, for we can work with 
a smaller fine and saturated log structure, see Remark \ref{fsremark}.

The general definition for the puncturing $(C,\M^\circ_C)$ of a log
smooth curve $(C,\M_C)\to (W,\M_W)$ at a section $p:W\to C$ with
image disjoint from any special points has to treat non-reduced base
schemes properly. Denote by $\alpha_P: \shP\to\O_C$ the divisorial log
structure defined by the Cartier divisor $\im(p)\subseteq C$. Now
define the \emph{puncturing of $(C,\M_C)$ along $p$} by the subsheaf
$\M_C^\circ\subseteq \M_C\oplus_{\O_C^\times}\shP^\gp$ agreeing with
$\M_C$ away from $p$ and generated by
pairs $(\sigma,\zeta)\in \M_{C,p}\oplus \shP_p^\gp$ with the property
\[
\alpha(\sigma)\neq 0 \quad\Longrightarrow\quad \zeta\in\shP.
\]
Thus $\M_C^\circ$ is the largest subsheaf of
$\M_C\oplus_{\O_C^\times} \shP^\gp$ to which the sum of structure
homomorphisms $\M_C\to\O_C$ and $\shP\to\O_C$ extends.

We now see how punctured curves can allow negative contact orders.
Suppose given a puncturing $(C,\M_C^{\circ})$ of a log smooth
curve $\pi:(C,\M_C)\rightarrow (W,\M_W)$ along a section $p$, and suppose
given a log morphism $f:(C,\M_C^{\circ})\rightarrow (X,\M_X)$. 
Then we obtain a composed map 
\[
u_p:\overline\M_{X,f(p)}=P_p\mapright{\bar f^{\flat}} \overline\M_{C}^{\circ}
\subseteq Q\oplus\ZZ\mapright{\pr_2} \ZZ
\]
where $Q=\overline{\M}_{W,\pi(p)}$. This is clearly analogous to the
contact order $u_p$ in the non-punctured case, but now it is possible
that the image of $u_p$ does not lie in $\NN$.

\begin{example}
\label{puncturedexample}
Let $X$ be a non-singular surface containing a non-singular curve
$D\cong\PP^1$ with $D^2=-1$. Consider the target space $(X,\M_{(X,D)})$. Take
as domain curve $C=D$, defined over the standard log point
$\Spec \kk^{\dagger}$. Choose a point $p\in C$ as a puncture. Thus
if $\eta$ is the generic point of $C$, then $\overline\M^{\circ}_{\eta}
=\NN$ but $\overline\M^{\circ}_p$ is a non-finitely generated monoid
contained in $\NN\oplus\ZZ$. We can define a log morphism $f:(C,
\M^{\circ}_C)\rightarrow (X,\M_{(X,D)})$ which is the identification of $C$
with $D$ as an ordinary morphism. Noting that $\overline\M_{(X,D)}=\NN_D$,
the constant sheaf on $D$ with stalk $\NN$, the map $\bar f^{\flat}
:\overline\M_{(X,D)}\rightarrow \overline\M_C^{\circ}$ is given by
$1\mapsto (1,-1)\in \NN_C\oplus\ZZ_p$. Note that $(1,-1)
\in\overline\M_{C,p}^{\circ}$. 

To see that this choice of $\bar f^{\flat}$ lifts to an actual log morphism, it
is enough to map the torsor $\shL_1^{\times}|_D$ associated to $1\in \NN_D$ to
the torsor $\shL_{(1,-1)}^{\times}$ associated to $(1,-1)\in \NN_C\oplus\ZZ_p$.
But the line bundle associated to the former torsor $\shL_1^\times|_D$ is the
conormal bundle of $D$ in $X$, i.e., $\shO_D(1)$. Since a section of the form
$(a,0)$ of $\overline\M_C^{\circ}$ is pulled-back from the standard log point,
the torsor $\shL_{(1,0)}^{\times}$ is trivial, and the line bundle associated to
the torsor $\shL_{(0,1)}^{\times}$ is the ideal sheaf of $p$ in $C$, i.e.,
$\O_C(-1)$. Thus the line bundle associated to the torsor
$\shL^{\times}_{(1,-1)}$ is $\O_C(1)$. We choose an isomorphism between
$\O_C(1)$ and $f^*\O_D(1)$ to define the log morphism $f$.

Thus we have constructed a punctured 
curve which can be viewed as being ``tangent to $D$ to order $-1$.'' 
\end{example}

With the definition of a punctured curve at hand, we can now define
a punctured stable map with a number $k$ of marked points and a
number $k'$ of punctures. The construction of the corresponding
stack $\tilde\scrM_{k,k'}(X,\M_X)$ is indeed completely
straightforward. Moreover, since the basicness condition does not
involve the marked points, the same definition also works for
punctured stable maps. 

What is a little more difficult is the right
version of obstruction theory for the construction of the virtual
fundamental class. The difficulty arises essentially because punctured
log structures do not behave well under base-change, and in particular
even the ghost sheaf $\overline\M_C^{\circ}$ does not pull back under
base change, but may get bigger. In particular, in a deformation theory
situation, where one considers $\bar W\subseteq W$ a closed subscheme
defined by a square zero ideal, and given a log smooth family
$C\rightarrow W$ restricting to $\bar C=C\times_W \bar W\rightarrow \bar W$,
with a choice of puncturing section $p:W\rightarrow C$, one will have
$\overline{\M}^{\circ}_{C} \subseteq \overline{\M}^{\circ}_{\bar C}$
(as sheaves on the same underlying toplogical space), but equality often
fails. As a result, given a morphism $\bar f:(\bar C,\M_{\bar C}^{\circ})
\rightarrow (X,\M_X)$, it may be possible that the image of
$\bar f^{\flat}$ does not lie in the smaller sheaf $\overline{\M}_C^{\circ}$,
and hence there is an essentially local, combinatorial obstruction
to lifting the morphism $\bar f$ to a morphism
$f:(C,\M_C^{\circ})\rightarrow (X,\M_X)$. Such an obstruction 
cannot be encoded in a cohomology group. 

The solution to this problem is roughly as follows. Set $W=\scrM_{k,k'}(X,
\M_X)$ for now. In \cite{ACMUW}, a construction of the \emph{Artin fan}
$\shA_W$
of a log stack $W$ is given. Without going into detail, this is a 
zero-dimensional
Artin stack which captures the combinatorial content of the log structure
on $W$, and has an \'etale open cover by toric stacks of the form
$[\Spec\kk[Q]/\Spec\kk[Q^{\gp}]]$ with $Q$ ranging over stalks of
$\overline{\M}_W$. In \cite{ACGS}, a closed substack $\shT_W$ of
$\shA_W$ is constructed, such that the morphism $W\rightarrow \shA_W$
factors through $\shT_W$. Roughly, one can define a relative perfect
obstruction theory over $\shT_W$.

This stack $\shT_W$ has one immediate disadvantage, which is that depending
on the combinatorics of the situation, it may not be equi-dimensional.
Thus there is no virtual fundamental class in general, and one needs to
take care in various contexts to extract numbers. Some examples of where
one can extract useful numbers appear in \S\ref{constructionsection}. 

\begin{remark}
\label{fsremark}
 From several points of view, punctured log structures are not particularly
well-behaved, e.g., they don't behave well under pull-back
and the stalks of the ghost sheaves at punctures
are not in general finitely generated. There is however a natural choice
of a fine saturated sub-log structure on $C$ associated to any punctured log map.
Given $\pi:(C,\M_C)\rightarrow (W,\M_W)$ and $f:(C,\M_C^{\circ})
\rightarrow (X,\M_X)$, there is a unique smallest sub-log structure
$\M_C^{\fs}\subseteq \M_C^{\circ}$ which is fine and saturated, contains
the image of $f^*\M_X$ under $f^{\flat}$, contains the image of
$\pi^*\M_W$ under $\pi^{\flat}$, and contains the log structure
$\M_{(C,p)}$. We note that $(C,\M_C^{\fs})\rightarrow (W,\M_W)$
is not log smooth, and the log structure $\M_C^{\fs}$ depends on $f$.
However, it has a pleasant tropical interpretation. Suppose $(W,\M_W)$
is the standard log point. Then the fibre over $1$ of $\Trop(C,\M_C^{\fs})
\rightarrow \Trop(W,\M_W)=\RR_{\ge 0}$ has an edge corresponding to
a puncture, which may either be bounded or unbounded, and the 
restriction of $\Trop(f)$ to this edge maps the edge to the longest
possible line segment or ray in $\overline{\M}^{\vee}_{X,f(p)}$ with one 
end-point given by $V_{\eta}\in \overline{\M}^{\vee}_{X,f(\eta)}$, where
$\eta$ is the generic point of the irreducible component of $C$ containing
$p$. If this edge is unbounded, then in fact $u_p\in \overline{\M}^{\vee}_{X,
f(p)}$ only takes non-negative values, and it is not necessary to puncture
the curve.
\end{remark}
%=========================================================

\section{The construction of mirrors}
\label{constructionsection}

\subsection{Algebras associated to pairs}
\label{pairalgebra}

We begin with a simple normal crossings pair $(X,D)$: $X$ is a smooth
projective variety and $D$ is a reduced simple normal crossings divisor. 
We also assume that for any collection $\{D_i\}$ of irreducible components
of $D$, $\bigcap_i D_i$ is also irreducible if non-empty. We obtain from
this pair a log scheme $(X,\M_{(X,D)})$. We shall write short-hand
$\M_X:=\M_{(X,D)}$, and usually just write $X$ instead of $(X,\M_{(X,D)})$;
it should be clear from context when we are talking about the log scheme.
More generally, for any log scheme $(W,\M_W)$, we shall leave off the
$\M_W$ in the notation.

We let $B=\Trop(X)$ be the tropicalization of $X$. Explicitly, let
$\Div_D(X)\subseteq \Div(X)$ be the subspace of divisors supported
on $D$, $\Div_D(X)_{\RR}=\Div_D(X)\otimes_{\ZZ}\RR$. Note that
$\Div_D(X)=\Gamma(X,\ol\M^\gp_X)$. We can write $\Trop(X)$ as a
polyhedral cone complex in the dual space $\Div_D(X)_{\RR}^*$ as
follows. Let $D=\bigcup_i D_i$ be the decomposition of $D$ into
irreducible components, and write $\{D_i^*\}$ for the dual basis of
$\Div_D(X)_{\RR}^*$. Then define $\P$ to be the collection of cones
\[
\P:=\left
\{\sum_{i\in I} \RR_{\ge 0} D_i^*\,|\, \hbox{$I\subseteq \{1,\ldots,m\}$
such that $\bigcap_{i\in I}D_i\not=\emptyset$}\right\},
\]
and define
\[
B:=\bigcup_{\tau\in\P}\tau.
\]

Writing $\Div_D(X)^*=\Hom(\Div_D(X),\ZZ)$, we define
\[
B(\ZZ)=B\cap \Div_D(X)^*.
\]
Given $p\in B(\ZZ)$, we have a stratum
\[
Z_p:=\bigcap_{i: \langle p,D_i\rangle>0} D_i
\]
and the open stratum $Z_p^{\circ}\subseteq Z_p$ obtained by deleting all
deeper strata. Note $Z_p, Z_p^{\circ}$ only depend on the minimal cone of 
$\Trop(X)$ containing $p$.

We fix a finitely generated, saturated submonoid $P\subset H_2(X,\ZZ)$
containing all effective curve classes and such that $P^{\times}:=
P\cap (-P)\subseteq
H_2(X,\ZZ)_{\tors}$.\footnote{Note that if $H_2(X,\ZZ)$ has torsion, then
$\Spec\kk[P]$ has a number of connected components, and the mirror family
we build will thus have a number of connected components. This fits with the
expectation in \cite{AM}.}
Let $\fom\subseteq \kk[P]$ be the 
monomial ideal generated by monomials in $P\setminus P^{\times}$, 
and fix an ideal $I\subseteq\kk[P]$ with $\sqrt{I}=\fom$.
We write $A_I:=\kk[P]/I$, and set
\begin{equation}
\label{Rdefinition}
R_I:=\bigoplus_{p\in B(\ZZ)} A_I\vartheta_p
\end{equation}
a free $A_I$-module. Our immediate goal is to define structure constants for
an $A_I$-algebra structure on 
$R_I$. In complete generality, this algebra structure will not be 
associative, but will be associative under some hypotheses on the pair $(X,D)$,
see e.g., Theorem \ref{asstheorem}.

We do this by defining structure constants:
\[
\vartheta_p\cdot\vartheta_q=\sum_{r\in B(\ZZ)} \alpha_{pqr} \vartheta_r
\]
with $\alpha_{pqr}\in A_I$. 
We are going to write monomials in $A_I$ as $t^{\ul \beta}$,
$\ul\beta\in P$, to emphasize the character of $A_I$ as the base ring of a
deformation. We then write
\begin{equation}
\label{alphaformula}
\alpha_{pqr}=\sum_{\ul{\beta}\in P\setminus I} 
N_{pqr}^{\ul{\beta}} t^{\ul{\beta}}
\end{equation}
where $N_{pqr}^{\ul{\beta}}\in \QQ$ are defined as follows. 

The data $\ul{\beta}, p,q$ and $r$ determine
a class $\beta$ of punctured curve on $X$, as follows. 
The associated homology 
class is $\ul{\beta}$. We consider curves of genus zero with two marked points,
$x_1$ and $x_2$, and one puncture, $x_3$. Now let $Z_1:=Z_p$, $Z_2:=Z_q$,
$Z_3:=Z_r$. Then $p,q,r$ determine sections
$s_i$ of $\Gamma(Z_i,(\overline{\shM}_{X})|_{Z_i}^*)$. 
Indeed, to define
$s_1$, we define a map 
$(\overline{\shM}_{X})|_{Z_1}\rightarrow\ul{\NN}$
as follows. One can identify the stalk 
$\overline{\shM}_{X,\eta_1}$
at the generic point $\eta_1$ of $Z_1$ with 
$\bigoplus_{i:\langle p,D_i\rangle>0} \NN D_i$. Then $s_1$ is
defined on an open set $U\subseteq Z_1$ as the composition 
\begin{equation}
\label{updef}
(\overline{\shM}_{X})|_{Z_1}(U)
\rightarrow \overline{\shM}_{Z_1,\eta_1}\mapright{p}
\NN,
\end{equation}
where the first map takes a section to its germ at $\eta_1$.
Put another way, we are imposing the condition
that the curve should be tangent to $D_i$ at the point $x_1$ to order
$\langle p,D_i\rangle$.

We define $s_2$ similarly using $q$, whereas
to define $s_3$, we use $-r$ instead of $r$, and in particular, $-r$ defines
a map $(\overline{\shM}_{X})|_{Z_3}\rightarrow \ZZ$, and unless $r=0$,
$x_3$ must be viewed as a punctured rather than a marked point.

We now obtain a moduli space $\scrM_{\beta}(X)$ of 
punctured maps to $X$ of class $\beta$. The next step is to 
impose a point constraint
on the point $x_3$ by selecting a point $z\in Z_3^{\circ}$ and constraining
the punctured point $x_3$ to map to $z$. This is a slightly delicate 
condition to impose in the log category. Indeed, there is of course an
evaluation map at the level of underlying stacks 
$\ul{ev}:\scrM_{\beta}(X)\rightarrow Z_3$, so the first thought would be
to define this moduli space to be the fibre product in the category of
stacks $\scrM_{\beta}(X)\times_{Z_3} z$. However, this is ignoring the log
structures. There is no log extension of the evaluation map
since the log structure on the moduli
space is smaller than the log structure at the punctured point. This
is a general feature of imposing constraints for stable log maps, 
and the solution is the \emph{evaluation space} introduced in \cite{ACGM},
or rather a punctured version of it. There is an Artin stack $\scrP(X,r)$,
the \emph{evaluation space of punctures of type $r$}, along with an evaluation
map $\ev:\scrM_{\beta}(X)\rightarrow \scrP(X,r)$. The Artin
stack $\scrP(X,r)$ is a $B\GG_m$-gerbe over $Z_3$. The choice of $z\in Z_3$ 
gives
a stack morphism $B\GG_m\rightarrow \scrP(X,r)$. In fact, $r$ determines
a canonical logarithmic extension of this map, with $B\GG_m$ carrying
a universal log structure induced by the divisorial log structure
$B\GG_m\subseteq [\AA^1/\GG_m]$. With this log structure, 
$\overline{\M}_{B\GG_m}=\NN$, the stalk of $\overline{\M}_{\scrP(X,r)}$
at the point corresponding to $z$ is $\overline{\M}_{X,z}$, and
the log stack morphism $B\GG_m\rightarrow \scrP(X,r)$ is given by
$r:\overline{\M}_{X,z}\rightarrow \NN$ at the level of ghost sheaves.
Once this morphism is defined, we can define 
$\scrM_{\beta,z}(X)=\scrM_{\beta}(X)
\times_{\scrP(X,r)}B\GG_m$. 
We have: 

\begin{lemma}
\label{dimensionlemma}
$\scrM_{\beta,z}(X)$ is a proper Deligne-Mumford stack
carrying a virtual fundamental class with virtual dimension
$-\ul{\beta}\cdot (K_X+D)$.
\end{lemma}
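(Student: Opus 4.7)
My plan has four steps. First, I would establish that the ambient stack $\scrM_\beta(X)$ of basic punctured stable maps of class $\beta$ is a proper Deligne--Mumford stack. For ordinary log stable maps this is the content of \cite{JAMS}, \cite{AC}, \cite{ACMW}, and the extension to punctured maps is achieved in \cite{ACGS}: the discreteness of $\beta$ (homology class, genus, marked and punctured points with contact orders) bounds the moduli, the basicness condition picks out the universal log structure on geometric fibres, and properness follows from a valuative criterion analogous to that of \cite{ACMW}.

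Second, the fibre product $\scrM_{\beta,z}(X) = \scrM_\beta(X) \times_{\scrP(X,r)} B\GG_m$ inherits these properties. The log morphism $B\GG_m \to \scrP(X,r)$ determined by $z \in Z_r^{\circ}$ is representable and proper---locally it is the closed immersion $\{z\} \to Z_r$ crossed with $B\GG_m$---so base-changing the proper DM stack $\scrM_\beta(X)$ along it again yields a proper DM stack.

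For the virtual class, I would transport the relative perfect obstruction theory of $\scrM_\beta(X)$ over the substack $\shT$ of the Artin fan (as discussed in \S\ref{Par: punctured invts}) through the base-change along $B\GG_m \to \scrP(X,r)$. Since this map is a regular closed embedding of stacky codimension $\dim Z_r$, the pull-back yields a relative perfect obstruction theory on $\scrM_{\beta,z}(X)$ over $\shT$, producing a virtual fundamental class whose rank is the expected dimension.

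Finally, for the dimension count, Riemann--Roch applied to $f^*T_X(-\log D)$ on the universal curve gives virtual dimension $\ul\beta \cdot c_1(T_X(-\log D)) + (\dim X - 3)(1-g) + n$ for the moduli of genus $g$ log stable maps with $n$ decorated points. For $g=0$, $n=3$, together with the $-\codim(Z_r)$ correction arising from the puncture at $x_3$ of contact order $-r$ and the $-\dim Z_r$ cut from the point constraint along $B\GG_m \to \scrP(X,r)$, one recovers the claimed $-\ul\beta \cdot (K_X+D)$. The main obstacle is the obstruction theory in the punctured setting: punctured log structures are not functorial under base-change and the base stack $\shT$ is generally non-equidimensional, so compatibility of the obstruction theory with the fibre-product structure requires the careful framework developed in \cite{ACGS}.
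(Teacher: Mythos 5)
The paper does not actually prove this lemma: it is stated as an announcement, with the construction of punctured moduli spaces, their obstruction theory and the gluing/virtual machinery all deferred to \cite{ACGS} and \cite{GSMirror}. So there is no in-paper argument to compare against; judged on its own, your outline follows the framework of \S\ref{Par: punctured invts} and \S\ref{pairalgebra} (properness and DM-ness of $\scrM_\beta(X)$ from the punctured analogue of \cite{JAMS}, \cite{ACMW}; the point constraint via the gerbe $\scrP(X,r)$ and the fibre product with $B\GG_m$; obstruction theory relative to $\shT_W\subseteq\shA_W$), and your arithmetic $\big(n-3+3+\ul\beta\cdot c_1(T_X(-\log D))\big)-\codim Z_r-\dim Z_r=-\ul\beta\cdot(K_X+D)$ is the intended bookkeeping.

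Two points in your sketch are thinner than they look, and the second is where the real content lies. First, the fibre product defining $\scrM_{\beta,z}(X)$ is taken in the category of fs log stacks along a morphism $B\GG_m\rightarrow\scrP(X,r)$ which is \emph{not} strict (on ghost sheaves it is $r:\overline\M_{X,z}\rightarrow\NN$), so your identification of it, locally, with $\{z\}\times_{Z_r}(\cdot)$ describes only the underlying stacks; properness and DM-ness do survive, but the identification needs the fs fibre product discussion, not just ordinary base change. Second, and more seriously, ``pulling back the relative perfect obstruction theory over $\shT$\ldots producing a virtual fundamental class whose rank is the expected dimension'' is exactly the step the paper warns about: $\shT_W$ need not be equidimensional, so a relative obstruction theory over it does not by itself yield a virtual class, and the ``$-\codim(Z_r)$ correction arising from the puncture'' is not a consequence of Riemann--Roch --- it is a statement about the virtual codimension of the relevant locus in the Artin fan which has to be established for this particular moduli problem (genus zero, two marked points, one puncture of contact order $-r$, with the point constraint). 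You flag this as ``the main obstacle'' and defer it to \cite{ACGS}, which is honest, but one should be clear that this deferred step, together with the pure-dimensionality needed to make the class well defined, \emph{is} the lemma; the rest of your argument is packaging. Since the paper itself defers to the same sources, your proposal is an acceptable sketch of the intended proof rather than a proof.
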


When this expected dimension is zero, we set
\[
N^{\ul{\beta}}_{pqr}=\int_{[\scrM_{\beta,z}(X)]^{\virt}} 1,
\]
and otherwise set
\[
N^{\ul{\beta}}_{pqr}=0.
\]

\begin{theorem}
\label{asstheorem}
If either $K_X+D$ or $-(K_X+D)$ is nef, then
the product given by the structure constants $\alpha_{pqr}$ is associative.
\end{theorem}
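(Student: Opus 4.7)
The plan is a WDVV-style degeneration argument in the category of punctured log Gromov-Witten invariants: associativity will follow from the rational equivalence of two boundary points of $\overline{M}_{0,4}\iso\PP^1$, pulled back through a forgetful morphism from a four-pointed punctured moduli space.

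Fix $p,q,s,u\in B(\ZZ)$ and $\ul\beta\in P\setminus I$, and introduce a class $\beta$ of genus-zero punctured log maps to $X$ carrying three marked points $x_1,x_2,x_3$ of contact orders $p,q,s$ and one punctured point $x_4$ of contact order $-u$, with underlying homology class $\ul\beta$. The evaluation-space construction of \S\ref{Par: punctured invts} allows me to impose the point constraint $\ev(x_4)=z\in Z_u^\circ$, yielding a moduli space $\scrM_{\beta,z}(X)$. A direct extension of Lemma \ref{dimensionlemma} gives virtual dimension $1-\ul\beta\cdot(K_X+D)$. Under either sign of the nef hypothesis, the only classes $\ul\beta$ whose decompositions $\ul\beta=\ul\beta_1+\ul\beta_2$ can contribute to the two sides of associativity are those with $\ul\beta\cdot(K_X+D)=0$, so the four-pointed moduli space has virtual dimension exactly $1$.

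Next I would consider the forgetful-stabilization morphism
\[
\varphi\colon \scrM_{\beta,z}(X)\lra \overline{M}_{0,4}\iso\PP^1
\]
and compare the virtual pullbacks of the two boundary points of $\overline M_{0,4}$ corresponding to the dual graph partitions $\{x_1,x_2\}\mid\{x_3,x_4\}$ and $\{x_2,x_3\}\mid\{x_1,x_4\}$. These two points are rationally equivalent in $\PP^1$, so their virtual pullbacks define the same zero-cycle on $\scrM_{\beta,z}(X)$. The decisive step is a logarithmic splitting/gluing formula for punctured invariants, provided by the forthcoming \cite{ACGS} and extended in \cite{GSMirror}. It expresses the degree of the virtual pullback of the first boundary as
\[
\sum_{r\in B(\ZZ)}\sum_{\ul\beta_1+\ul\beta_2=\ul\beta} N^{\ul\beta_1}_{p,q,r}\,N^{\ul\beta_2}_{r,s,u},
\]
and the degree of the virtual pullback of the second boundary as
\[
\sum_{r\in B(\ZZ)}\sum_{\ul\beta_1+\ul\beta_2=\ul\beta} N^{\ul\beta_1}_{q,s,r}\,N^{\ul\beta_2}_{p,r,u};
\]
the two are therefore equal, which is precisely the $\vartheta_u t^{\ul\beta}$-coefficient form of $(\vartheta_p\vartheta_q)\vartheta_s = \vartheta_p(\vartheta_q\vartheta_s)$. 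The matching of contact orders at the gluing node follows from the tropical balancing of \S\ref{Par: tropical}: the two branches of the node receive opposite contact orders $+r$ and $-r$, so that the output puncture of the first component glues to the marked point of the second exactly as the product $\alpha_{pqr}\alpha_{rsu}$ anticipates. Finiteness of the inner sum over $r$ modulo $I$ reduces to the boundedness, for fixed $\ul\beta\in P\setminus I$ and fixed input contact orders, of the possible tropical types, itself a consequence of properness of $\scrM_{\beta,z}(X)$.

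The principal technical obstacle is the punctured splitting formula in the presence of a point constraint imposed through the evaluation space $\scrP(X,u)$ rather than through an ordinary evaluation morphism, compounded by the fact that punctured log structures do not commute with base change and that the obstruction theory of \S\ref{Par: punctured invts} lives over the a priori non-equidimensional stack $\shT_W$. Establishing a virtual pullback formula compatible with the logarithmic gluing of two three-pointed punctured components at a node of opposite contact orders, and turning it into the numerical identity above, is precisely the content deferred to \cite{GSMirror}.
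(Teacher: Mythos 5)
Your strategy is essentially the paper's own: you form the four-pointed genus-zero punctured moduli space with inputs $p,q,s$ and one puncture of contact order $-u$, impose the point constraint at $z\in Z_u^{\circ}$ through the punctured evaluation space, observe the virtual dimension is $1$ once $\ul{\beta}\cdot(K_X+D)=0$, map to $\overline{M}_{0,4}$, and equate the logarithmic virtual degrees over two boundary points, deferring the splitting/gluing of punctured curves at the level of virtual classes to \cite{ACGS} and \cite{GSMirror}. This matches the proof sketched in the paper step for step.

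There is, however, one genuine gap: the nef hypothesis is used in the wrong place, and as a result your argument never uses it essentially --- which cannot be right, since the product is not expected to be associative for a general pair $(X,D)$ (see the remark following the theorem). The reduction to $\ul{\beta}\cdot(K_X+D)=0$ needs no hypothesis at all: by definition $N^{\ul{\beta}_i}_{\cdot\cdot\cdot}=0$ unless $\ul{\beta}_i\cdot(K_X+D)=0$, so both sides of the associativity identity are automatically supported on such classes. Where the hypothesis is actually needed is precisely the step you assert without qualification, namely that the virtual pullback of a boundary point of $\overline{M}_{0,4}$ has degree $\sum_{r}\sum_{\ul{\beta}_1+\ul{\beta}_2=\ul{\beta}} N^{\ul{\beta}_1}N^{\ul{\beta}_2}$. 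A boundary fibre can a priori receive contributions from splittings with $\ul{\beta}_1\cdot(K_X+D)$ and $\ul{\beta}_2\cdot(K_X+D)$ nonzero of opposite sign: the corresponding three-pointed constrained moduli spaces then have virtual dimensions different from zero, so these configurations are invisible in your sums (the $N$'s there are defined to be zero), yet nothing in your argument shows their contribution to the virtual degree vanishes. If $K_X+D$ or $-(K_X+D)$ is nef, the numbers $\ul{\beta}_i\cdot(K_X+D)$ all have a common sign, so their vanishing sum forces each to vanish, and the dimension count in the paper then shows that every contribution to the virtual degree comes from a splitting counted by a product $N^{\ul{\beta}_1}N^{\ul{\beta}_2}$. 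You need to insert this dimension-counting argument at the splitting step; without it, the identification of the boundary degree with the double sum is exactly the point at which an unconditional version of your proof would fail.
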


\begin{proof}[Sketch of proof.]
A complete proof will be given in \cite{GSMirror}. The idea is standard.
We would like to show the coefficient of $\vartheta_r$ is the same in the
two products $(\vartheta_{p_1}\cdot\vartheta_{p_2})\cdot \vartheta_{p_3}$
and $\vartheta_{p_1}\cdot(\vartheta_{p_2}\cdot\vartheta_{p_3})$. 
Expanding in terms of the classes $\ul{\beta}$, we need to show that for
each $\ul{\beta}\in P\setminus I$, $r\in B(\ZZ)$, we have
\begin{equation}
\label{asseq}
\sum_{\ul{\beta_1},\ul{\beta_2},s
\atop \ul{\beta_1}+\ul{\beta_2}=\ul{\beta}}
N^{\ul{\beta_1}}_{p_1p_2s} N^{\ul{\beta_2}}_{sp_3r}
=
\sum_{\ul{\beta_1},\ul{\beta_2},s
\atop \ul{\beta_1}+\ul{\beta_2}=\ul{\beta}}
N^{\ul{\beta_1}}_{p_2p_3s} N^{\ul{\beta_2}}_{p_1sr}
\end{equation}
where the sums are over all splittings of $\ul{\beta}$ in $P\setminus I$ and all
$s\in B(\ZZ)$.
To show this equality, one fixes a point $z\in Z_r^{\circ}$
and considers the moduli space $\scrM_{\beta,z}(X)$
of genus $0$ 
four-pointed punctured curves, with tangency conditions
at the four marked points given by $p_1,p_2,p_3$ and $-r$, defined exactly
as in the definition of $\scrM_{\beta,z}(X)$ in the three-pointed case.
The virtual dimension of this moduli space is $-\ul{\beta}\cdot (K_X+D)+1$.

Note that by construction the numbers $N^{\ul{\beta_i}}$ are all zero
unless $\ul{\beta_i}\cdot(K_X+D)=0$, so we may assume that
$\ul{\beta}\cdot (K_X+D)=0$.
Thus the virtual dimension
of $\scrM_{\beta,z}(X)$ is $1$, and there is a ``virtually finite''
morphism $\psi:\scrM_{\beta,z}(X)\rightarrow \overline{\shM}_{0,4}$,
with a suitable notion of ``logarithmic virtual degree''. One needs
to show that either side of \eqref{asseq} coincides with the virtual
degree of this morphism. This is done by looking at the (logarithmic)
fibre of this morphism over different boundary points of
$\overline{\shM}_{0,4}$, and showing that certain logarithmic fibres
represent curves which can be decomposed into curves contributing, say,
to $N^{\ul{\beta_1}}_{p_1p_2s}$ and $N^{\ul{\beta_2}}_{sp_3r}$ respectively.
The condition on $\pm(K_X+D)$ being nef guarantees via dimension counting
arguments that all contributions to the ``virtual degree'' of
$\psi$ arise in this manner. Since the virtual degree is then independent
of the choice of boundary point, we obtain \eqref{asseq}.

The main technical difficulty involves the fact that one has to glue
logarithmic curves, and doing this at the level of the virtual cycles
is still technologically difficult. General gluing techniques are
currently under development with D.\ Abramovich and Q.\ Chen.
\end{proof}

\begin{remark}
The hypotheses of the above theorem (unfortunately omitted in an earlier
version of this paper) reflect the fact that there really should exist
a full analogue of quantum cohomology, a ``relative quantum cohomology
ring,'' an algebro-geometric analogue of symplectic cohomology.
We have described the degree $0$ part of the ring, and the product
we have defined is only the projection of the product to the degree $0$
part of the ring. Such a projection in general would not be expected to
preserve associativity. We are working with D.\ Pomerleano to define this
relative quantum cohomology ring.
\end{remark}

The following will be useful for analyzing a number of situations:

\begin{proposition}
\label{intersectionnumbers}
Let $\beta$ be a class of punctured curve with $n+m$ tangency conditions,
with $x_1,\ldots,x_n$ being marked points with tangency condition
specified by $p_1,\ldots,p_n\in B(\ZZ)$ and $x_{n+1},\ldots,x_{n+m}$
being punctured points with tangency condition specified by
$-p_{n+1},\ldots,-p_{n+m}$ with $p_{n+1},\ldots,p_{n+m}\in B(\ZZ)$.
Then in order for 
$\scrM_{\beta}(X)$ to be non-empty, 
we must have for any $D'\in \Div_D(X)$, 
\[
\ul{\beta}\cdot D'= \sum_{i=1}^n \langle p_i,D'\rangle-\sum_{i=n+1}^{n+m} 
\langle p_i,D'\rangle.
\]
\end{proposition}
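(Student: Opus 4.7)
The plan is to express both sides of the claimed identity as the degree of a single line bundle on the domain of a punctured stable map, computed by two different recipes. Fix a point $[f:(C,\M_C^\circ)\to X]\in\scrM_\beta(X)$ and denote by $\pi:(C,\M_C^\circ)\to (W,\M_W)$ its structure morphism over some log scheme.

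First I would translate $D'$ into log-theoretic data. Since $D'$ is supported on $D$, it corresponds to a global section $\ol{D'}\in\Gamma(X,\ol\M_X^\gp)$, whose associated line bundle $\shL_{\ol{D'}}$ is canonically isomorphic to $\O_X(D')$. Applying $f^\flat$ produces a section $\ol m:=f^\flat(\ol{D'})\in\Gamma(C,\ol\M_C^{\circ,\gp})$, and by functoriality of the torsor-to-line-bundle construction, $\shL_{\ol m}\iso f^*\O_X(D')$. On any geometric fibre of $\pi$, this line bundle has degree $\ul\beta\cdot D'$, which is the left-hand side of the claimed equation.

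Next I would compute the same degree from the local logarithmic data carried by $\ol m$. The quotient $\ol\M_C^{\circ,\gp}/\pi^{-1}\ol\M_W^\gp$ is a constructible sheaf whose stalks vanish away from the special points of $C$. At a marked point $x_i$ with $i\le n$, the stalk is $\ZZ$ and the image of $\ol m$ equals the contact order $u_{x_i}(\ol{D'})$, which by the definition of the tangency condition $p_i$ in \eqref{updef} is $\langle p_i,D'\rangle$. At a punctured point $x_i$ with $n<i\le n+m$ the tangency is $-p_i$, and hence the corresponding image is $-\langle p_i,D'\rangle$. Choosing compatible local lifts of $\ol m$ back to $\M_C^{\circ,\gp}$ produces a rational section of $\shL_{\ol m}$ whose divisor on a geometric fibre of $\pi$ is supported at the special points with these multiplicities.

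The main obstacle is handling the nodes. A priori each of the two branches at a node $q$ contributes an integer in the quotient sheaf, but the nodal balancing relation $V_{\eta_1}\circ\chi_1-V_{\eta_2}\circ\chi_2 = u_q\cdot\rho_q$ implies that these two integers differ by a multiple of $\rho_q$, which is pulled back from the base and therefore vanishes in $\ol\M_C^{\circ,\gp}/\pi^{-1}\ol\M_W^\gp$. The delicate point is to choose the local liftings at each node so that the two branch contributions to the divisor cancel exactly (and not merely modulo pullbacks from $W$); once this is arranged, the sum of local multiplicities reduces to the marked and punctured point contributions, and equating this with $\deg\shL_{\ol m}=\ul\beta\cdot D'$ yields the proposition.
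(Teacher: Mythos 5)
Your proposal is correct and follows essentially the same route as the paper: identify $D'$ with a section of $\ol\M_X^\gp$, pull back the associated line bundle along $f$, and equate the topological degree $\ul{\beta}\cdot D'$ with the degree computed from the contact orders at the marked and punctured points. The only differences are that the paper simply cites \cite{JAMS}, Lemma 1.14 (and its punctured extension in \cite{ACGS}) for exactly the step you labour over by hand — the local degree contributions and the cancellation of the two branch contributions at each node — and that your sign convention $\shL_{\ol{D'}}\iso\O_X(D')$ is opposite to the paper's stated $\shL_{D_i}=\O_X(-D_i)$, though since you apply it consistently on both sides this does not affect the final identity.
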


\begin{proof}
Note that $\Gamma(X,\overline{\shM}_X)$ can be naturally identified
with the submonoid $\bigoplus \NN D_i\subseteq \Div_D(X)$. For any 
$\ol m\in\Gamma(X,\overline{\shM}_X)$,  we have the associated line
bundle $\shL_{\ol m}$. Then $\shL_{D_i}=\O_X(-D_i)$. If we have a
punctured curve representing the type $\beta$, say $f:C\rightarrow
X$, with $C$ defined over the standard log point, then
$\ul{f}^*\O_X(-D_i)$ must be the line bundle $\shL_i$ associated to
the torsor corresponding to $\bar f^{\flat}(D_i)$, where $\bar
f^{\flat}:\Gamma(X,\overline{\shM}_X) \rightarrow
\Gamma(C,\overline{\shM}_C)$ is induced by
$f^{\flat}:\ul{f}^{-1}\shM_X\rightarrow\shM_C$. 

Now the value of the total degree of $\shL_i$ can be calculated
using \cite{JAMS}, Lemma 1.14 in the case there are no punctures, 
and the same result continues to hold in the punctured case \cite{ACGS}.
In particular, the total degree of $\shL_i$ is
\[
-\sum_{j=1}^n \langle p_j,D_i\rangle+\sum_{j=n+1}^{n+m} \langle p_j,D_i\rangle.
\]
This degree must coincide with the degree
of $\ul{f}^*\O_X(-D_i)$, yielding the desired formula.
\end{proof}

\begin{example}
\label{zerothorder}
Consider the case that $I=\fom$. In this case, the only curve classes
which may contribute to \eqref{alphaformula} are elements $\ul{\beta}$
of $H_2(X,\ZZ)_{\tors}$. Such a class can only be represented by a constant
map, and hence $\ul{\beta}=0$. In particular,
any punctured log map $f:(C,x_1,x_2,x_3)\rightarrow X$
representing a
point in $\scrM_{\beta,z}(X)$
must be the constant map with image $z$. In fact,
$\scrM_{\beta,z}(X)$ consists of a single point, with no
automorphisms, and $N^{\ul{\beta}}_{pqr}=1$. In addition,
$p,q$ and $r$ must all lie in the same cone $\sigma$ of $\Trop(X)$. 
Then by Proposition \ref{intersectionnumbers}, we have the equality 
$p+q=r$ in $\sigma$. We then obtain
\[
R_{\fom}=A_{\fom}[B]:=\bigoplus_{p\in B(\ZZ)} A_{\fom} \vartheta_p,
\]
with the multiplication rule given by
\[
\vartheta_p\cdot\vartheta_q = \begin{cases}
\vartheta_{p+q} & \hbox{$p, q$ in a common cone of $\Trop(X)$}\\
0 & \hbox{otherwise},
\end{cases}
\]
see \cite{GHKSTheta}, \S 2.1.

It is not difficult to show along the lines of \cite{GHKSTheta},
Prop.~3.17,
that $\Spec R_I\rightarrow \Spec A_I$ is a flat deformation of
$\Spec R_{\fom}\rightarrow \Spec A_{\fom}$. Note that $\Spec A_{\fom}$
consists of a finite number of points, and is a single point if
$H_2(X,\ZZ)_{\tors}=0$.
\end{example}

\begin{remark}
The construction of the multiplication law given here can be viewed as
a generalization of the Frobenius structure conjecture given in
\S0.4 of the first arXiv version of \cite{GHK}. In particular, the coefficient
of $\vartheta_0$ in $\vartheta_p\cdot\vartheta_q$ is precisely as described
in \S0.4.
\end{remark}

%=========================================================
\subsection{The log Calabi-Yau case}
\label{logCYsection}

Consider a simple normal crossings pair $(X,D)$ with $U=X\setminus
D$. We say $(X,D)$ is \emph{log Calabi-Yau} if for all $m>0$ the
space $H^0(X,\omega_X(D)^{\otimes m}) \subseteq
H^0(U,\omega_U^{\otimes m})$ is one-dimensional, generated by
$\Omega^{\otimes m}$ for $\Omega$ a nowhere-vanishing form
$\Omega\in H^0(U,\omega_U)$. A result of Iitaka \cite{Ii} yields
that this subspace is independent of the compactification of $U$, so
this is really an intrinsic property of $U$. In particular, $K_X+D$
is effective and $K_X$ is supported on $D$. Thus we can write
$K_X=\sum (a_i-1) D_i$ with $a_i\ge 0$.

We have $\Trop(X)\subseteq \Div_D(X)_{\RR}^*$ as before.
We define $\Div'_D(X)_{\RR}^*$ to be the subspace of
$\Div_D(X)_{\RR}^*$ spanned by those $D^*_i$ with $a_i=0$.
Set
\[
B:=\Trop(X)\cap \Div'_D(X)_{\RR}^*.
\]
So if $K_X+D=0$, then $B=\Trop(X)$.

\begin{definition} We say $(X,D)$ is a \emph{maximal log Calabi-Yau pair}
if $B$ is pure-dimensional of dimension $\dim_{\RR} B=\dim X$.
\end{definition}

For the remainder of this subsection we assume that $(X,D)$ is a maximal log 
Calabi-Yau pair.

We thus obtain an $A_I$-module  $R_I$ given by \eqref{Rdefinition},
with a not necessarily associative algebra structure given by
\eqref{alphaformula}. Note that unless $(X,D)$ is a minimal model,
i.e., $K_X+D=0$, associativity does not follow from Theorem \ref{asstheorem}.
We define, however, a sub-$A_I$-module
$S_I\subseteq R_I$ defined by
\[
S_I=\bigoplus_{p\in B(\ZZ)} A_I \vartheta_p.
\]

\begin{proposition}
$S_I$ is closed under the non-associative algebra structure on $R_I$,
turning $S_I$ into an associative $A_I$-algebra.
\end{proposition}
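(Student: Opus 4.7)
The plan is to establish two things: that $S_I\subseteq R_I$ is
closed under the product defined by \eqref{alphaformula}, and that
the restricted product is associative. Closure will follow from a
dimension count based on Proposition \ref{intersectionnumbers};
associativity will then follow from the four-pointed moduli-space
argument sketched for Theorem \ref{asstheorem}, with the extra
constraint $p_i\in B(\ZZ)$ supplying, via the log Calabi--Yau
condition, the dimension control for which that theorem needed a nef
hypothesis.

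For closure, I would fix $p,q\in B(\ZZ)$ and an integral point $r$
of $\Trop(X)$, and show $\alpha_{pqr}=0$ unless $r\in B(\ZZ)$. By
\eqref{alphaformula} and Lemma \ref{dimensionlemma},
$N^{\ul\beta}_{pqr}\neq 0$ forces the virtual dimension to be zero,
i.e., $\ul\beta\cdot(K_X+D)=0$. Writing $K_X+D=\sum_i a_iD_i$ with
$a_i\ge 0$ and applying Proposition \ref{intersectionnumbers},
\[
\ul\beta\cdot(K_X+D)
=\sum_i a_i\bigl(\langle p,D_i\rangle+\langle q,D_i\rangle
-\langle r,D_i\rangle\bigr).
\]
The hypothesis $p,q\in B$ forces $\langle p,D_i\rangle=\langle
q,D_i\rangle=0$ whenever $a_i>0$, so the left side reduces to
$-\sum_i a_i\langle r,D_i\rangle$. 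Vanishing of this expression,
combined with $a_i\ge 0$ and $\langle r,D_i\rangle\ge 0$, forces
$\langle r,D_i\rangle=0$ for every $i$ with $a_i>0$, placing $r$ in
$\Trop(X)\cap\Div'_D(X)_{\RR}^*=B$.

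For associativity, I would rerun the four-pointed moduli space
argument of Theorem \ref{asstheorem} for $\vartheta_{p_1},
\vartheta_{p_2}, \vartheta_{p_3}$ with $p_i\in B(\ZZ)$. By the
closure step just established, the internal index $s$ in
\eqref{asseq} may be restricted to $B(\ZZ)$, so the identity reduces
to a comparison of boundary contributions to
$\scrM_{\beta,z}(X)$ with all tangency data in $B(\ZZ)$. The same
calculation as in the closure step now shows
$\ul\beta\cdot(K_X+D)=0$ automatically for every contributing class
$\ul\beta$, which is precisely the dimension identity for which
Theorem \ref{asstheorem} invoked the nef hypothesis. With this in
hand the dimension count of that proof goes through verbatim: the
virtual degree of the morphism
$\psi:\scrM_{\beta,z}(X)\to\overline{\shM}_{0,4}$ agrees with either
side of \eqref{asseq} and is independent of the chosen boundary
point, giving the identity.

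The main obstacle, as in Theorem \ref{asstheorem}, is the
virtual-cycle level gluing of punctured log curves required to
identify the relevant boundary fibres of $\psi$ with products of
two-point structure constants; this is a technical input from
\cite{GSMirror} and the joint work with Abramovich and Chen. Once
that gluing is in place, the proof reduces to the numerical identity
$\ul\beta\cdot(K_X+D)=0$ on $B$ and the closure argument above.
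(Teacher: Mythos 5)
Your closure argument is correct and is essentially the paper's: combine Lemma \ref{dimensionlemma} with Proposition \ref{intersectionnumbers}, use $\langle p,D_i\rangle=\langle q,D_i\rangle=0$ for $a_i>0$, and the sign of $-\sum_i a_i\langle r,D_i\rangle$ forces $r\in B(\ZZ)$ (the paper phrases it contrapositively, concluding $\ul{\beta}\cdot(K_X+D)<0$ when $r\notin B$, but it is the same computation).

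The associativity step, however, has a genuine gap, stemming from a misreading of what the nef hypothesis in Theorem \ref{asstheorem} is for. The identity $\ul{\beta}\cdot(K_X+D)=0$ for the \emph{total} class is automatic in that theorem as well (the $N^{\ul{\beta_i}}$ vanish by definition unless the virtual dimension is zero), so it is not ``the dimension identity for which Theorem \ref{asstheorem} invoked the nef hypothesis.'' Nefness of $\pm(K_X+D)$ is invoked to control the \emph{splittings} $\ul{\beta}=\ul{\beta_1}+\ul{\beta_2}$ arising in the boundary fibres of $\psi:\scrM_{\beta,z}(X)\to\overline{\shM}_{0,4}$: only when each piece satisfies $\ul{\beta_i}\cdot(K_X+D)=0$ do the boundary contributions to the logarithmic virtual degree match products of structure constants, and without nefness the two pieces could pair with $K_X+D$ with opposite nonzero signs. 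Your justification for discarding the bad configurations --- ``by the closure step the internal index $s$ may be restricted to $B(\ZZ)$'' --- only restricts the algebraic sums in \eqref{asseq}; it says the structure constants $N^{\ul{\beta_1}}_{p_1p_2s}$ are \emph{defined} to be zero there, not that the corresponding boundary strata of the four-pointed moduli space contribute nothing to the virtual degree. So the claim that the dimension count of Theorem \ref{asstheorem} ``goes through verbatim'' is unsupported.

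What is missing is a second application of Proposition \ref{intersectionnumbers} to the split pieces, which is exactly what the paper does: for the factor carrying $p_3$ and the puncture $-r$, with $p_3,r\in B(\ZZ)$, one gets $\ul{\beta_2}\cdot D_i=\langle s,D_i\rangle\ge 0$ for every $i$ with $a_i>0$, hence $\ul{\beta_2}\cdot(K_X+D)\ge 0$; so if $\ul{\beta_2}\cdot(K_X+D)\neq 0$ the moduli space defining $N^{\ul{\beta_2}}_{sp_3r}$ has \emph{negative} virtual dimension, and such a splitting cannot contribute to the logarithmic virtual degree. This sign argument, not the closure statement about the algebra, is what replaces the nef hypothesis in the maximal log Calabi--Yau setting; adding it would complete your sketch to the same level as the paper's.
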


\begin{proof}[Sketch of proof]
We need to show (1) $S_I$ is closed under the multiplication law
\[
\vartheta_{p}\cdot\vartheta_{q}
=\sum_{r\in \Trop(X)(\ZZ)} \alpha_{pqr}\vartheta_r;
\] (2) this
multiplication law is associative.

(1) is straightforward.
Fixing 
\[
p,q\in B(\ZZ)\subset \Trop(X)(\ZZ),
\]
consider $r\in \Trop(X)(\ZZ)$. We
want to show $\alpha_{pqr}=0$ if $r\not\in B(\ZZ)$. In order for a curve class
$\ul{\beta}$ to contribute to $N^{\ul{\beta}}_{pqr}$, we need $-\ul{\beta}\cdot
(K_X+D)=0$ by Lemma \ref{dimensionlemma}. In fact we claim that $\ul{\beta}\cdot
(K_X+D)<0$. Indeed, $K_X+D=\sum_i a_iD_i$, so it is sufficient to show that
$N^{\ul\beta}_{pqr}\neq 0$ implies $\ul{\beta}\cdot D_i\le 0$ for all $i$ with
inequality for at least one $i$ with $a_i>0$. By Proposition
\ref{intersectionnumbers}, if $\M_\beta(X)\neq\emptyset$ then $\ul{\beta}\cdot
D_i=\langle p,D_i\rangle+\langle q,D_i\rangle-\langle r,D_i \rangle$ for all
$i$. Moreover, if $a_i>0$, then $\langle p,D_i\rangle=\langle q,D_i\rangle=0$ by
assumption that $p,q\in B$, so $\ul{\beta}\cdot D_i=-\langle r,D_i\rangle$.
Since $\langle r,D_i\rangle>0$ for at least one $i$ with $a_i>0$, as otherwise
$r\in B$, we get the claim.

For (2), one follows the same argument of associativity as given in 
Theorem \ref{asstheorem}. One again needs to check that there is
no contribution to the logarithmic virtual degree of $\psi:\scrM_{\beta,z}(X)
\rightarrow \overline{\shM}_{0,4}$ coming from a decomposition
$\ul{\beta}=\ul{\beta_1}+\ul{\beta_2}$ with $\ul{\beta_i}\cdot(K_X+D)\not=0$.
A similar argument using Proposition \ref{intersectionnumbers} as given
above can be used to show that if $\ul{\beta_2}\cdot(K_X+D)\not=0$, then
in fact $-\ul{\beta_2}\cdot (K_X+D)<0$ and thus the moduli space defining
$N^{\ul{\beta_2}}_{sp_3r}$ is of negative virtual dimension. Then such
a splitting cannot contribute to the logarithmic virtual degree.
\end{proof}

\begin{construction}
The mirror family to the log Calabi-Yau $(X,D)$ is the formal scheme $\check
X:=\Spf \widehat S\rightarrow \Spf \widehat{\kk[P]}$, where $\widehat{\kk[P]}$
is the completion of $\kk[P]$ at the monomial ideal $\fom$, and
$\widehat S=\liminv S_I$, where the limit is over all monomial ideals $I$ with
$\sqrt{I}=\fom$.\footnote{In some cases, notably if $D$ supports an ample
divisor, $\hat S$ contains a natural subring $S'$
finitely generated as a $\widehat{\kk[P]}$-algebra and $\check X$ is
the completion at the closed fibre of $\Spec S'\rightarrow\Spec
\widehat{\kk[P]}$. In
general, this is not the case and the mirror presently can only be constructed
as a formal scheme.}

Note that as in Example \ref{zerothorder}, $\check X$ is flat 
over $\Spf\widehat{\kk[P]}$, and the condition that $(X,D)$
is a maximal pair implies that the relative dimension of $\check X$
over $\Spf\widehat{\kk[P]}$ coincides with the dimension of $X$.
\end{construction}

\begin{example}
Let $\bar X=\PP^1\times \PP^1$, and let $\bar D\subseteq \bar X$ be its
toric boundary, so that $\bar D=\bar D_1+\cdots+\bar D_4$, in some chosen
cyclic ordering. Choose a point $p\in \bar D_1^{\circ}$, blow up this
point to obtain $\pi:X\rightarrow \bar X$, and let $D$ be the proper
transform of $\bar D$. Then $(X,D)$ is a log Calabi-Yau pair with
$K_X+D=0$.
Noting that $H_2(X,\ZZ)$
is generated by the classes of $C_1=D_1,C_2=D_2,C_3=E$ where $E$ is
the exceptional curve of the blowup, and these classes also generate
the cone of effective curves, we can take $P=\bigoplus_{i=1}^3 \NN C_i$. 

$B$, as an abstract cone complex, can be
identified with $\Trop(\bar X)$, but this identification is only piecewise
linear. Let $p_i = D_i^*\in B(\ZZ)$. The contributions to $\vartheta_{p_i}
\cdot \vartheta_{p_{i+1}}$ (the index $i$ taken modulo $4$) 
only come from constant maps, so that,
as in Example \ref{zerothorder}, we have the monomial relations
\[
\vartheta_{p_i}\cdot\vartheta_{p_{i+1}}=\vartheta_{p_i+p_{i+1}}.
\]

On the other hand, consider $\vartheta_{p_1}\cdot\vartheta_{p_3}$. Any
curve class $\ul{\beta}$ contributing to the coefficient of
$\vartheta_r$ in this product must satisfy,
with $r=\sum_{i=1}^4 r_i D_i^*$,
\[
\ul{\beta}\cdot D_i= \langle p_1, D_i\rangle + \langle
p_3,D_i\rangle -\sum_j \langle r_j D_j^*, D_i\rangle=
\begin{cases}
1-r_i & i=1,3\\
-r_i & i=2,4
\end{cases}
\]
by Proposition \ref{intersectionnumbers}. 
If $\ul{\beta}=\sum_{i=1}^3 \beta_i C_i$ with $\beta_i\ge 0$, 
then we obtain from the above
constraints that
\begin{align*}
-\beta_1+\beta_2 + \beta_3= {} & 1-r_1\\
\beta_1 = {} & -r_2\\
\beta_2 = {} & 1-r_3\\
\beta_1 = {} & -r_4
\end{align*}
Thus in particular $r_2=r_4$, and since necessarily $r_i=r_{i+1}=0$ for some $i$
(indices taken modulo $4$), we must have $r_2=r_4=0$ and at most one of $r_1$,
$r_3$ non-zero. In particular, $\beta_1=0$. By non-negativity of the $\beta_i$,
if $r_3\not=0$, then $r_3=1$ and $\beta_2=0$, $\beta_3=1$, so $\ul{\beta}=E$.
But no curve of class $E$ intersects $D_3$, so this possibility does not occur.
If $r_1\not=0$, then non-negativity of the $\beta_i$ rules out a solution. Thus
the only choice of $\ul{\beta}$ satisfying the above constraints is $r=0$,
$\ul{\beta}=C_2$. Since $Z_r=X$, we fix a general point $z\in X$, and there is a
unique line in the class $C_2$ passing through $z$ and intersecting $D_1$ and
$D_3$ transversally. Thus we obtain
\[
\vartheta_{p_1}\cdot\vartheta_{p_3}=t^{C_2} \vartheta_0.
\]
Similarly, consider $\vartheta_{p_2}\cdot\vartheta_{p_4}$. Any
curve class $\ul{\beta}$ contributing to the coefficient of
$\vartheta_r$ in this product must satisfy
\[
\ul{\beta}\cdot D_i=
\begin{cases}
-r_i & i=1,3\\
1-r_i & i=2,4.
\end{cases}
\]
A similar analysis shows the only possible classes are
$\ul{\beta}=C_1+C_3\sim D_3$ or $\ul{\beta}=C_1$. The first has $r=0$ and
the second $r=p_1$. In the first case, after choosing $z\in X$
general, the only curve in $\scrM_{\beta,z}(X)$ is a line
of class $D_3$ passing through $z$, transversal to $D_2$ and $D_4$.
In the second case, one chooses $z\in Z_r^{\circ}=D_1^{\circ}$,
and the only curve in $\scrM_{\beta,z}(X)$ is the curve
$D_1$ itself, with the points $x_1$ and $x_2$ mapping to $D_2$ and
$D_4$ respectively, and $x_3$ a point of tangency order $-1$ with
$D_1$, much as in Example \ref{puncturedexample}. Thus we obtain
\[
\vartheta_{p_2}\cdot\vartheta_{p_4} = t^{C_1+C_3}\vartheta_0 + t^{C_1}\vartheta_{p_1}.
\]
In particular, with $\vartheta_0$ the unit in the ring, we have
\[
\check X:=\Spf \widehat{\kk[P]}[\vartheta_{p_1},\ldots,\vartheta_{p_4}]/
(\vartheta_{p_1}\vartheta_{p_3}-t^{C_2}, \vartheta_{p_2}\vartheta_{p_4} 
-t^{C_1+C_3}-t^{C_1}\vartheta_{p_1}).
\]
This coincides with the mirror of the pair $(X,D)$ defined in \cite{GHK}.
Note that as in \cite{GHK}, Corollary 6.11, this mirror is defined over 
$\Spec\kk[P]$,
which, in the surface case, is the case whenever $D$ supports an ample divisor.
\end{example}

%=========================================================
\subsection{The Calabi-Yau case}
\label{CYmirror}

Consider now the situation that we are given a simple normal crossings
degeneration $\shX\rightarrow T$, where $T$ is the spectrum of a discrete
valuation ring and whose generic fibre $\shX_{\eta}$ is a non-singular
Calabi-Yau variety, i.e., $K_{\shX_{\eta}}=0$. Let $0\in T$ be the closed
point. We view $(\shX, \shX_0)$ as a log Calabi-Yau pair of dimension
$\dim \shX_{\eta}+1$, and thus can apply the construction of the previous
sub-section, with some minor alterations. For convenience here, we shall
assume that $\shX_0$ is reduced, and while this can always be achieved
via stable reduction, in fact this is unnecessary, and only maximally unipotent
monodromy is needed to get a sensible result out of the construction we
give here.

Let $A_1(\shX/T)$ denote the group of algebraic equivalence classes
of complete curves in $\shX$ contracted by the map to $T$. This group
contains the cone of effective curve classes, and we choose a finitely
generated monoid $P\subseteq A_1(\shX/T)$ such that $P\cap (-P)
\subseteq A_1(\shX/T)_{\tors}$ and $P$ contains every effective
curve class. As in \S\ref{logCYsection}, we obtain
$\Trop(\shX)\subseteq \Div_{\shX_0}(\shX)^*_{\RR}$ and a sub-complex 
which we shall write
as $\cone B$ rather than as $B$, and then define 
\[
B:=\{ p \in \cone B\,|\, \langle \shX_0, p\rangle =1\}.
\]
Here we interpret $\shX_0 \in \Div_{\shX_0}(\shX)$.
We assume that $\dim_{\RR}B=\dim \shX_{\eta}$. This is equivalent to 
maximal unipotency of the degeneration $\shX\rightarrow T$.

We note here that $B$ in fact coincides with the Kontsevich-Soibelman skeleton 
of $\shX\rightarrow T$, a canonically defined topological subspace of
the Berkovich analytic space of $\shX_{\eta}$, introduced in \cite{KS}
and studied in more detail in \cite{NiXu}. The latter reference
in particular shows that $B$ is a closed pseudo-manifold.

Note that the divisor $\shX_0$ defines an  $\NN$-grading on $\cone B(\ZZ)$, with
$\deg p =\langle \shX_0, p\rangle$. In particular, with $A_I=\kk[P]/I$
as before for a choice of monomial ideal $I$ with $\sqrt{I}=\fom$, 
the $A_I$-module
\[
S_I=\bigoplus_{p\in\cone B(\ZZ)} A_I\vartheta_p
\]
is $\NN$-graded. The multiplication rule on $S_I$ is then defined
using \eqref{alphaformula}. The structure coefficients only count
punctured curves mapping to the central fibre $\shX_0$, so only
depend on $\shX_0$ as a log scheme, rather than on more refined
information carried by $\shX$. Noting that $\langle
\shX_0,\ul{\beta}\rangle=0$ for any class $\ul{\beta}\in P$, it
follows from Proposition \ref{intersectionnumbers} that if
$N^{\ul{\beta}}_{pqr}\not=0$, then $\langle \shX_0,
p\rangle+\langle\shX_0,q\rangle=\langle\shX_0,r\rangle$, i.e., $\deg
p+\deg q=\deg r$. Thus the multiplication law respects the grading
and $S_I$ is a graded ring. This gives a finite order deformation
$\check X_I:=\Proj S_I\rightarrow \Spec \kk[P]/I$.

At this point one can take the limit in two different ways. First, one can take
the limit of the $\check X_I$ over all $I$ and obtain a formal scheme
$\check\foX$ projective over $\Spf \widehat{\kk[P]}$. Grothendieck existence
then yields a projective family $\check\shX\rightarrow\Spec \widehat{\kk[P]}$.
More directly, unlike in the general affine case, one can define
\[
\widehat S:=
\bigoplus_{p\in\cone B(\ZZ)} \widehat{\kk[P]} \vartheta_p,
\]
and use the same structure constants $\alpha_{pqr}$ for defining
the product $\vartheta_p \cdot\vartheta_q$.
Since $B$ is compact, the degree $d$ part of $\widehat S$ is a finitely
generated free module over $\widehat{\kk[P]}$, and in particular
$\vartheta_p\cdot\vartheta_q$ is a sum over only a finite number of 
$\vartheta_r$ (with formal power series coefficients). Then we have
$\check\shX=\Proj \widehat{S}$.

Note that points of $\cone B(\ZZ)$ of degree $d$
are canonically identified, by dividing by $d$, with the points of
$B({1\over d}\ZZ)$, giving the indexing of sections of the line
bundle $\shO_{\Proj \widehat S}(d)$ on $\Proj\widehat S$ mentioned
in the introduction.

\begin{construction}
The mirror family to the degeneration $\shX\rightarrow T$ of Calabi-Yau
varieties is the flat family 
$\check\shX:=\Proj \widehat S\rightarrow \Spec\widehat{\kk[P]}$.
\end{construction}

There are of course a host of questions associated with such a construction,
the most immediate being:

\begin{question}
\label{basicquestion}
Show that the above construction coincides with previously known
constructions.
\end{question}

See Remark \ref{questionremark} for a bit of discussion on this.

%=========================================================
\subsection{Scattering diagrams and broken lines}
\label{scatteringsection}

In this subsection we will give a somewhat rougher outline explaining the
more detailed general picture suggested in the introduction. To avoid some
complexities, we will make some simplifying assumptions and work with
a maximal log Calabi-Yau pair $(X,D)$ with $K_X+D=0$, i.e., a \emph{minimal
model} of a log Calabi-Yau variety. We continue to assume that $D$
is simple normal crossings; this is not a sufficient degree of
generality, as one would expect log Calabi-Yau pairs to have dlt minimal
models. Somewhat more generally, one may assume that $(X,D)$ is log
smooth, so that the pair has toroidal singularities, but the class
of toroidal singularities are orthogonal to the class of dlt singularities.
Indeed, if $(X,D)$ has dlt singularities, the divisor $D$ is generically
normal crossings on each stratum of $D$, whereas this is not true
for toroidal singularities.

In \cite{GHKSTheta}, Construction 1.1, we define the notion of a
\emph{polyhedral affine manifold}. In the case at hand, this will be the pair
$(B,\P)$. Such a polyhedral (in this case cone) complex is asked to satisfy five
properties: (1) Each $\tau\in\P$ injects into $B$: this of course comes from the
simple normal crossings assumption, and is manifest in the description of
$B\subseteq \Div_D(X)_{\RR}^*$. (2) The intersection of two cones in $\P$ is a
cone in $\P$: this is again manifest from the description of $B$, and follows
from the assumption that $\bigcap_{i\in I} D_i$ is connected when non-empty. (3)
$B$ is pure dimension $n$, where $n=\dim X$. Indeed, maximality implies $\dim
B=n$. Further, by \cite{KX}, Theorem 2, (i), $B$ is pure dimension. (4) Every
codimension one cone of $\P$ is contained in precisely two top-dimensional
cones. We see this as follows. If $Z$ is any stratum of $X$, we denote by $D_Z$
the union of all closed substrata of $X$ contained properly in $Z$. By repeated
use of adjunction, $(Z,D_Z)$ is a log Calabi-Yau pair. The fact that $B$ has the
same dimension at every point implies $(Z,D_Z)$ is also maximal for any stratum
$Z$. Now if $\dim Z=1$, then this implies $Z\cong\PP^1$ and $D_Z$ consists of
two points. (5) \emph{The $S_2$ condition}. We do not repeat this condition
here, but it follows from the observation that if $Z$ is a stratum with $\dim
Z\ge 2$, then $D_Z$ is connected, see \cite{KX}, \S2, or \cite{Kol}, 4.37. This
condition guarantees that the constructed zeroth order mirror as described in
Example \ref{zerothorder} satisfies Serre's $S_2$ condition.

Thus $(B,\P)$ satisfies all five hypotheses of that Construction.
We can now give $(B,\P)$ the structure of a polyhedral affine manifold
in the sense of \cite{GHKSTheta}, Construction 1.1,
generalizing the one given in \cite{GHK}. Indeed, let
$\Delta\subseteq B$ be the union of codimension $\ge 2$ cones of $B$.
We shall describe an integral affine structure on $B_0:=B\setminus
\Delta$. Each cone $\tau$ in $B$ carries a canonical integral affine structure.
Denote by $\Lambda_{\tau}$ the lattice $\RR\tau\cap \Div_D(X)^*$ of
integral tangent vectors to $\tau$. To construct an affine structure
on $B_0$ compatible with the affine structures on codimension zero and one
cones, it suffices, as in \cite{GHKSTheta}, \S1, to give for every codimension
one cone $\rho$ contained in two maximal cones $\sigma_1,\sigma_2$ an 
identification of $\Lambda_{\sigma_1}$ with $\Lambda_{\sigma_2}$
preserving $\Lambda_{\rho}\subseteq\Lambda_{\sigma_i}$. To do this,
if $u_1\in \Lambda_{\sigma_1}$ is such that $\Lambda_{\rho}+\ZZ u_1
=\Lambda_{\sigma_1}$, we need to provide $u_2\in\Lambda_{\sigma_2}$
such that $\Lambda_{\rho}+\ZZ u_2=\Lambda_{\sigma_2}$. We then identify
$\Lambda_{\sigma_1}$ with $\Lambda_{\sigma_2}$ by taking $u_1$ to
$\pm u_2$ with the sign adjusting for the local orientation.

Let $Z_{\rho}\cong\PP^1$ be the stratum of $X$ corresponding to $\rho$.
Suppose $\rho$ is generated by $D_{i_2}^*,\ldots, D_{i_n}^*$,
$\rho\subseteq \sigma_1,\sigma_2$ with additional generators of
$\sigma_1$ and $\sigma_2$ being $D_{i_1}^*$ and $D_{i'_1}^*$ respectively.
We can take $u_1=D_{i_1}^*$, in which case we take
\[
u_2=-D_{i'_1}^*-\sum_{j=2}^n (D_{i_j}\cdot Z_{\rho}) D_{i_j}^*.
\]

This formula can be viewed in terms of punctured invariants: given any
suitable choice of $u_1$, there is a unique choice of $u_2$ such that
one can construct a punctured log map 
with underlying domain $C=Z_{\rho}$, with
two punctures $p_1$, $p_2$ mapping to $Z_{\sigma_1}$, $Z_{\sigma_2}$
respectively, and with $u_{p_1}=u_1$, $u_{p_2}=-u_2$. Considerations
similar to those of Proposition \ref{intersectionnumbers} yield the above
formula from this.

This completes the description of $(B,\P)$ as a polyhedral affine manifold in 
the sense of \cite{GHKSTheta}, Construction 1.1.

We next turn to the construction of a wall structure on $B$ (sometimes
referred to as a scattering diagram, e.g., in \cite{GHK}). The definition
of a wall structure on a polyhedral affine manifold, generalizing
the original source \cite{Annals}, is given in
\cite{GHKSTheta}, Definition 2.11. We give an abbreviated version of the
definition of a (conical) wall structure here,
leaving off some conditions which are unnecessary for the discussion at hand.
For full details, we refer the reader to \cite{GHKSTheta}. In the discussion
that follows, fix a monomial ideal $I\subseteq P$ with radical $\fom$.

\begin{definition}
1) A \emph{wall} on $B$ is a codimension one rational polyhedral cone
$\fop$ contained in some maximal cone $\sigma$ of $B$, along with an element
\[
f_{\fop}=\sum_{m\in\Lambda_{\fop},\, \ul{\beta}\in P} c_{m,\ul{\beta}} 
t^{\ul\beta}z^m 
\in A_I\otimes_{\kk}\kk[\Lambda_{\fop}].
\]
Here $\Lambda_{\fop}$ is the lattice of integral tangent vectors to
$\fop$.

2) A \emph{structure} $\scrS$ is a finite set of walls.
\end{definition}

We now explain how to construct the \emph{canonical structure} on $B$ 
using the pair $(X,D)$. This generalizes the canonical scattering diagram
of \cite{GHK}, Definition 3.3.
Fix $\tau\in\P$, a class $\ul{\beta}\in P\setminus I$,
and a vector $u_p\in\Lambda_{\tau}$. Writing $u_p=\sum_i a_i D_i^*$,
assume that $a_i\not=0$ whenever $D_i^*\in \tau$, so that $u_p$ is
not tangent to any proper face of $\tau$.
Then $u_p$ determines maximal
contact data consisting of the pair $Z_{\tau}$ and $u_p\in\Gamma(Z_{\tau},
(\overline{\M}_X|_{Z_{\tau}})^*)$, defined using $u_p$ instead of $p$
as in \eqref{updef}. In particular, $\ul{\beta}$ 
along with this maximal contact data at the one punctured point
determines a type $\beta$ of punctured curve, yielding
a moduli space $W:=\scrM_{\beta}(X)$, with universal family of
punctured maps $(\pi:C\rightarrow W, p, f)$. 
The virtual dimension of $W$ over the stack $\shT_W$, the substack of
the Artin fan $\shA_W$ described at the end of \S\ref{Par: punctured invts},
is $n-2$, where $n=\dim X$.  
Let $C^{\fs}$ be the auxilliary fine saturated log structure on $C$,
as described in Remark \ref{fsremark},
and let $W^{\fs}$ be the pull-back of this log structure to $W$ via
the section $p$. We then have a diagram
\[
\xymatrix@C=30pt
{W^{\fs}\ar[r]^{f}\ar[d]_{\pi}&X\\
W&}
\]
yielding a tropicalized diagram
\[
\xymatrix@C=30pt
{\Trop(W^{\fs})\ar[r]^{\Trop(f)}\ar[d]_{\Trop(\pi)}&\Trop(X)=B\\
\Trop(W)&}
\]
The fibres of $\Trop(\pi)$ are either line segments or rays.

We might expect $\Trop(W)$ to be $(n-2)$-dimensional, as $W$ is
of virtual dimension $n-2$ over $\shT_W$, but this would not be the case 
in general.
However, the $(n-2)$-dimensional skeleton of $\Trop(W)$ can be viewed
as a ``virtual'' complex of the correct dimension. Explicitly,
let $\sigma\in\Trop(W)$ be an $(n-2)$-dimensional cone
with the property
that $\fop_{\sigma}:=\Trop(f)(\Trop(\pi)^{-1}(\sigma))$ is an
$(n-1)$-dimensional cone. We will associate a number $N_{\sigma}$
to this data as follows, which can be thought of as the pull-back of the
virtual fundamental class on $W$ to the (virtually) codimension $n-2$
stratum of $W$ indexed by $\sigma$. Recall the discussion of the obstruction
theory for punctured curves in \S\ref{Par: punctured invts}.
The cone $\sigma$ yields an \'etale morphism
$\shA_\sigma:=\big[\Spec\kk[\sigma^{\vee}\cap \Lambda_{\sigma}^*]/
\Spec\kk[\Lambda_{\sigma}^*]\big]  \rightarrow\shA_W$, where $\Lambda_{\sigma}$
is the group of integral tangent vectors to $\sigma$. 
Let $y\in\shA_{\sigma}$ be the closed
point, with stabilizer $\Lambda_{\sigma}\otimes\GG_m$. Then one 
can show the image of $y$ lies in $\shT_W$. Denote
by $\cl(y)$ the closure of $y$ in $\shT_W$ with the reduced induced stack
structure. Then as a stack $\cl(y)$ has dimension $-n+2$. We obtain
a Cartesian diagram
\[
\xymatrix@C=30pt
{
W_{\cl(y)} \ar[r]\ar[d] &W\ar[d]\\
\cl(y)\ar[r] &\shT_W
}
\]
Note that possibly unlike $\shT_W$, $\cl(y)$ is pure-dimensional. Pulling
back the fundamental class of the stack $W=\scrM_\beta(X)$ to $W_{\cl(y)}$
gives a virtual fundamental class $[W_{\cl(y)}]^{\virt}$. Set
\[
N_{\sigma}:=\int_{[W_{\cl(y)}]^{\virt}} 1.
\]

Using this, we can define a wall $(\fop_{\sigma},f_{\fop_{\sigma}})$ with
\[
f_{\fop_\sigma}=\exp\left(k_{\sigma} N_{\sigma} t^{\ul\beta}z^{-u_p} \right),
\]
where $k_{\sigma}$ is the index of the image of the lattice of integral
tangent vectors to $\Trop(\pi)^{-1}(\sigma)$ in $\Lambda_{\fop}$.
We then define $\scrS$ to be the collection of all such walls, running
over all choices of $u_p\not=0$ and all choices of $\ul{\beta}\in P\setminus
I$, and all choices of cones $\sigma$ as described above.

This gives the generalization of the canonical scattering diagram
defined in \cite{GHK}.

\cite{GHKSTheta} develops the theory of broken lines in \S 3.1, and then
defines the notion of \emph{consistency} in \S 3.2. Roughly put, given
a structure, one can use it to glue together some standard open
charts (see \cite{GHKSTheta}, \S 2.4) to obtain a family $\check\foX^{\circ}$
over $\Spec A_I$ in any event. Sums over broken lines define regular
functions on these charts, and the consistency of the structure
is equivalent to these functions being compatible under the gluing
maps, yielding global functions on $\check\foX^{\circ}$, the theta
functions. One then
defines $\check\foX=\Spec \Gamma(\check\foX^{\circ},\shO_{\check\foX^{\circ}})$.
The fact that $\check\foX$ is flat over $\Spec A_I$ then follows
from the existence of theta functions, see \cite{GHKSTheta}, Proposition 3.21.

\begin{theorem}
The canonical structure $\scrS$ described above is consistent.
\end{theorem}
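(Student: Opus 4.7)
The plan is to prove consistency of the canonical structure $\scrS$ by reinterpreting it as an enumerative identity among punctured Gromov-Witten invariants, and then establishing that identity by a tropical degeneration argument parallel to the proof of Theorem \ref{asstheorem}. Following \cite{GHKSTheta}, \S3.2, consistency of $\scrS$ is equivalent to the statement that, for each $p\in B(\ZZ)$, the local theta function $\vartheta_p$ defined on each chart of $\check\foX^\circ$ as a sum over broken lines with asymptotic direction $p$ and given endpoint is compatible with the wall-crossing gluing maps. By the characterization of multiplication via broken lines (\cite{GHKSTheta}, \S3), this in turn is equivalent to showing that the coefficients $\alpha^{BL}_{pqr}$ in the broken-line expansion $\vartheta_p\cdot\vartheta_q=\sum_r \alpha^{BL}_{pqr}\vartheta_r$ are well-defined (independent of the endpoint) and coincide with the structure constants $\alpha_{pqr}$ of \S\ref{pairalgebra}.

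The heart of the proof is therefore the enumerative identity: for each triple $p,q,r\in B(\ZZ)$ and each $\ul\beta\in P\setminus I$, the sum of contributions of pairs of broken lines with asymptotics $p,q$ and with combined outgoing monomial $t^{\ul\beta}z^r$ at a general endpoint $z\in Z_r^\circ$ equals $N^{\ul\beta}_{pqr}$. To prove this I would stratify the moduli space $\scrM_{\beta,z}(X)$ by tropical type, as in the proof of Theorem \ref{asstheorem}. Each stratum corresponds to a tropical tree in $B$ with one trivalent vertex over the basepoint and two trunks ending at punctures with directions $p$ and $q$; the intermediate vertices of each trunk correspond to interactions with walls of $\scrS$. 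The key matching is that the local contribution at a wall vertex is exactly the wall function $f_{\fop_\sigma}=\exp(k_\sigma N_\sigma t^{\ul\beta}z^{-u_p})$, with the exponential form accounting for multiple covers and repeated collisions, while the trivalent vertex at $z$ contributes a constant-map term as in Example \ref{zerothorder}. Summing over tropical types on the moduli-theoretic side then reproduces the broken-line expansion on the combinatorial side.

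The main obstacle is the virtual splitting of the class $[\scrM_{\beta,z}(X)]^{\virt}$ along a tropical type into gluings of virtual classes attached to the vertex moduli spaces. This is precisely the punctured logarithmic gluing/degeneration formalism being developed in \cite{ACGS} with Abramovich and Chen, acknowledged as technically difficult at the end of the proof of Theorem \ref{asstheorem}. Once such a splitting is available, the remaining step is bookkeeping: verifying that the contribution of a valent-$k$ vertex lying along a trunk of a broken line, obtained by integrating against $[W_{\cl(y)}]^{\virt}$ as in the definition of $N_\sigma$, matches the $k$-th term in the Taylor expansion of the exponential $\exp(k_\sigma N_\sigma t^{\ul\beta}z^{-u_p})$, and that the index factors $k_\sigma$ correctly track the discrepancy between the lattice of integral tangent vectors of $\Trop(\pi)^{-1}(\sigma)$ and the lattice $\Lambda_{\fop_\sigma}$. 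Modulo these technical inputs, the argument follows the same pattern as the consistency of the canonical scattering diagram in the surface case \cite{GHK}, \S3, but now with broken lines and walls interpreted directly via punctured invariants rather than through the auxiliary tropical enumeration of \cite{GPS}.
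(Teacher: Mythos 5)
The paper contains no proof of this theorem: the authors explicitly defer it to future work, indicating only that the argument requires a punctured-invariant interpretation of broken lines (a marked point with tangency $p\in B(\ZZ)$, a punctured point fixing the final direction, and an additional point fixing the endpoint) together with independence of the resulting counts as the endpoint varies, all resting on the gluing technology of \cite{ACGS} that is still in preparation. Your proposal follows essentially this same outline—reducing consistency to endpoint-independence of broken-line counts, identifying those counts with punctured Gromov-Witten contributions by degenerating along tropical types as in the associativity argument, and deferring the decisive virtual gluing/splitting step to \cite{ACGS}—so it matches the paper's intended approach at the same level of incompleteness.
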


The proof of this theorem, to be given in future work, requires a punctured 
interpretation of broken lines. We only summarize this here as we do not
wish to review the definition of broken line.
This interpretation can be accomplished in a similar way
to the description of the multiplication law: one needs to consider
curves which have a marked point with an ordinary tangency
condition specified by $p\in B(\ZZ)$, and a punctured point specifying
the final direction of the broken line. An additional point is required
to fix the endpoint of the broken line. The details will appear elsewhere.
We merely remark here that 
Tony Yu has provided an interpretation for broken lines for log Calabi-Yau
surfaces in \cite{Yu}.

\begin{remark}
\label{questionremark}
Here we sketch a possible approach to Question \ref{basicquestion}.
An obvious approach to comparing the present mirror construction with the
construction of \cite{Annals} (which is known to agree with the Batyrev-Borisov
construction when applied to a natural choice of toric degeneration of
complete intersection Calabi-Yau varieties in a toric variety, 
see \cite{GBB}) runs as follows. By the strong
uniqueness properties of the inductive construction of the wall
structure in \cite{Annals} it is enough to (a)~derive the initial
wall structure by a local computation of punctured invariants near
the log singular locus and (b)~show that the inductive insertion of
walls leads to consistency in codimension two. This consistency
should follow from showing independence of the variation of tropical
end points of the counting of punctured curves around a codimension
two locus, set up similarly to the interpretation of broken lines.
\end{remark}


\begin{thebibliography}{cccccc}

\bibitem{AC} D.~Abramovich, Q.~Chen: \emph{Stable logarithmic maps to
Deligne--Faltings pairs II,} Asian J.\ Math.\ {\bf 18} (2014), 465--488.

\bibitem{Aetal} D.~Abramovich, Q.~Chen, D.~Gillam, Y.~Huang, M.~Olsson,
M.~Satriano, S.~Sun, \emph{Logarithmic geometry and moduli}, Handbook of
mdouli, Vol.\ I, 1--61, Adv.\ Lect.\ Math.\ (ALM), 24, Int.\ Press, 
Somerville, MA, 2013.

\bibitem{ACGM} D.~Abramovich, Q.~Chen, W.~Gillam, S.~Marcus: 
\emph{The evaluation space of logarithmic stable maps}, preprint, 2010.

\bibitem{ACGS} D.~Abramovich, Q.~Chen, M.~Gross, B.~Siebert: 
\emph{Punctured logarithmic curves,} in preparation.

\bibitem{ACMUW} D.~Abramovich, Q.~Chen, S.~Marcus, M.~Ulirsch, J.~Wise:
\emph{Skeletons and fans of logarithmic structures}, preprint, 2015.

\bibitem{ACMW} D.~Abramovich, Q.~Chen, S.~Marcus, J.~Wise:
\emph{Boundedness of the space of stable logarithmic maps}, preprint,
2014.

\bibitem{Arguez} H.~Arg\"uz:
\emph{Log-geometric techniques for open invariants in mirror symmetry}, Thesis, University of Hamburg~2016.

\bibitem{AM} P.~Aspinwall, D.~Morrison: \emph{Stable singularities in string 
theory. With an appendix by Mark Gross.} Comm.\ Math.\ Phys.\ {\bf 178} (1996), 
115--134

\bibitem{Clay} P.~Aspinwall, T.~Bridgeland, A.~Craw, M.~Douglas, 
M.~Gross, A.~Kapustin, G.~Moore, G.~Segal, B.~Szendroi, P.~Wilson, 
Dirichlet branes and mirror symmetry. 
Clay Mathematics Monographs, 4. American Mathematical Society, Providence, RI;
Clay Mathematics Institute, Cambridge, MA, 2009. x+681 pp.

\bibitem{Bat} V.~Batyrev: \emph{Dual polyhedra and mirror symmetry 
for Calabi-Yau hypersurfaces in toric varieties.}
J. Algebraic Geom.  {\bf 3}  (1994), 493--535.

\bibitem{BB} V.~Batyrev, and L.~Borisov: \emph{On Calabi-Yau complete
intersections in toric varieties,}
in {\it Higher-dimensional complex varieties (Trento, 1994)}, 39--65,
de Gruyter, Berlin, 1996.

\bibitem{CLS} P.~Candelas, M.~Lynker, R.~Schimmrigk,
\emph{Calabi-Yau manifolds in weighted ${\bf P}\sb 4$,}
Nuclear Phys.\ B {\bf 341} (1990), 383--402.

\bibitem{COGP} P.~Candelas, X.~de la Ossa, P.~Green, and L.~Parkes,
\emph{A pair of Calabi-Yau manifolds as an exactly soluble superconformal 
theory,}  Nuclear Phys. B  {\bf 359}  (1991), 21--74.

\bibitem{CPS} M.~Carl, M.~Pumperla, and B.~Siebert,
\emph{A tropical view of Landau-Ginzburg models}, available at
http://www.math.uni-hamburg.de/home/siebert/preprints/LGtrop.pdf

\bibitem{Chen} Q.~Chen: \emph{Stable logarithmic maps to Deligne--Faltings
pairs I,}  Ann.\ of Math.\ (2) {\bf 180} (2014), 455--521.

\bibitem{CW} A.~Cooper and J.~Wolfson: \emph{Lagrangian Flows, Maslov Index 
Zero and Special Lagrangians}, preprint, 2016.
\bibitem{Fukaya} K.~Fukaya, \emph{Multivalued Morse theory, 
asymptotic analysis and mirror symmetry,} in \emph{Graphs and patterns 
in mathematics and theoretical physics}, 205--278, 
Proc. Sympos. Pure Math., {\bf 73}, Amer. Math. Soc., Providence, RI, 2005.

\bibitem{GivICM} A.~Givental, \emph{Homological geometry and mirror
symmetry}, Proceedings of the International Congress of Mathematicians,
Vol.\ 1, 2 (Z\"urich, 1994), 472--480, Birkh\"auser, Basel, 1994.
 
\bibitem{GrPl} B.~Greene and M.~Plesser, \emph{Duality in Calabi-Yau moduli 
space,} Nuclear Phys.\ B  {\bf 338}  (1990), 15--37.

\bibitem{GBB} M.~Gross:
\emph{Toric Degenerations and Batyrev-Borisov Duality},
Math. Ann. {\bf 333}, (2005) 645-688.

\bibitem{GAMS} M.~Gross: \emph{The Strominger-Yau-Zaslow conjecture: 
from torus fibrations to degenerations,}  
Algebraic geometry—Seattle 2005. Part 1,  149--192, Proc. Sympos. Pure Math., 
{\bf 80}, Part 1, Amer. Math. Soc., Providence, RI, 2009. 

\bibitem{GP2} M.~Gross: \emph{Mirror symmetry for $\PP^2$ and tropical
geometry}, Adv.\ Math., {\bf 224} (2010), 169--245.

\bibitem{Gbook} M.~Gross: \emph{Tropical geometry and mirror symmetry},
CBMS Regional Conference Series in Mathematics, {\bf 114}. American Mathematical
Society, Providence, RI, 2011. xvi+317 pp.

\bibitem{GHK} M.~Gross, P.~Hacking, S.~Keel: \emph{Mirror symmetry for log
Calabi-Yau surfaces I}, Publ.\ Math.\ Inst.\ Hautes \'Etudes Sci.,
{\bf 122}, (2015) 65--168.

\bibitem{GHKSTheta} M.~Gross, P.~Hacking, S.~Keel, B.~Siebert: 
\emph{Theta functions on varieties with effective anti-canonical class},
preprint, 2016.

\bibitem{GHKS} M.~Gross, P.~Hacking, S.~Keel, B.~Siebert: \emph{Theta functions
and K3 surfaces}, in preparation.

\bibitem{GKR} M.~Gross, L.~Katzarkov, H.~Ruddat, \emph{Towards mirror symmetry
for varieties of general type,} To appear in Advances in Mathematics.

\bibitem{GPS} M.~Gross, R.~Pandharipande, B.~Siebert:
\emph{The tropical vertex}, Duke Math. J.  {\bf 153} (2010), 297--362. 

%\bibitem{PartI} M.~Gross, and B.~Siebert:
%\emph{Mirror symmetry via logarithmic degeneration data I},
%J. Diff. Geom., {\bf 72}, (2006) 169--338.
%
%\bibitem{PartII} M.~Gross, and B.~Siebert:
%\emph{Mirror symmetry via logarithmic degeneration data II},
%J. Algebraic Geom., {\bf 19}, (2010) 679--780

\bibitem{Annals} M.~Gross and B.~Siebert: \emph{From affine
geoemtry to complex geometry}, \emph{Annals of Mathematics}, 
{\bf 174}, (2011), 1301-1428.

\bibitem{JAMS} M.~Gross and B.~Siebert: \emph{Logarithmic
Gromov-Witten invariants}, 
J.\ Amer.\ Math. Soc., {\bf 26} (2013). 451--510.

\bibitem{GStheta} M.~Gross and B.~Siebert: \emph{Theta functions and
mirror symmetry}, preprint, 2011.

\bibitem{GSMirror} M.~Gross and B.~Siebert: \emph{A general mirror
construction}, in preparation.

\bibitem{HS} H.-J.~Hein and S.~Sun, 
\emph{Calabi-Yau manifolds with isolated conical singularities}, preprint, 2016.

\bibitem{HoriVafa} K.~Hori and C.~Vafa, \emph{Mirror symmetry}, 
arXiv:hep-th/0002222, 2000.

\bibitem{Ii} S.~Iitaka: \emph{On logarithmic Kodaira dimension of algebraic
varieties}, in \emph{Complex analysis and algebraic geometry,} 
175--189, Iwanami Shoten, Tokyo, 1977.

\bibitem{Illu}
L.\ Illusie: \emph{Logarithmic spaces (according to K. Kato)},
in {\sl Barsotti Symposium in Algebraic Geometry} (Abano Terme 1991),
183--203, Perspect.\ Math.\ 15, Academic Press 1994.

\bibitem{Ir1} H.~Iritani, \emph{Quantum $D$-modules and equivariant
Floer theory for free loop spaces,} Math.\ Z.\ {\bf 252} (2006),
577--622.

\bibitem{Ir2} H.~Iritani, \emph{Quantum $D$-modules and generalized mirror
transformations}, Topology, {\bf 46}, (2008), 225--276.

\bibitem{IP1} E.~Ionel, T.~Parker, \emph{Relative Gromov-Witten invariants,}
Ann.\ of Math.\ (2) {\bf 157} (2003),  45--96.

\bibitem{IP2} E.~Ionel, T.~Parker, \emph{The symplectic sum formula for 
Gromov-Witten invariants},  Ann.\ of Math.\ (2)  {\bf 159}  (2004), 
935--1025.

\bibitem{F.Kato} F.~Kato:
\emph{Log smooth deformation theory},
Tohoku Math.\ J.\ {\bf 48} (1996), 317--354.

\bibitem{Kato 2000} F.~Kato:
  \emph{Log smooth deformation and moduli of log smooth curves},
  International J.\ Math~\textbf{11} (2000) 215--232.

\bibitem{K.Kato} K.~Kato:
\emph{Logarithmic structures of Fontaine--Illusie},
in: {\sl Algebraic analysis, geometry, and number theory}
(J.-I.~Igusa et.~al.\ eds.), 191--224,
Johns Hopkins Univ.~Press, Baltimore, 1989.

\bibitem{Kol} J.~Koll\'ar: \emph{Singularities of the minimal model
program}, Cambridge Tracts in Mathematics, vol. 200, Cambridge University
Press, Cambridge, 2013. With the collaboration of S\'andor Kov\'acs.

\bibitem{KX} J.~Koll\'ar, C.~Xu: \emph{The dual complex of Calabi--Yau
pairs},
Invent.\ Math.~\textbf{205} (2016), 527-–557. 

\bibitem{KS} M.~Kontsevich, and Y.~Soibelman: 
\emph{Homological mirror symmetry and torus fibrations},
in: {\sl Symplectic geometry and mirror symmetry} (Seoul, 2000), 203--263, 
World Sci.\ Publishing, River Edge, NJ, 2001. 

\bibitem{KS2} M.~Kontsevich, and Y.~Soibelman: 
\emph{Affine structures and non-archimedean analytic spaces},
The unity of mathematics,  321--385, 
Progr.\ Math., 244, Birkh\"auser Boston, Boston, MA, 2006. 

\bibitem{Li2} J.~Li: \emph{A degeneration formula of GW-invariants,}
J.\ Differential Geom.\ {\bf 60}  (2002), 199--293.

\bibitem{LR} A.-M.~Li, Y.~Ruan: \emph{Symplectic surgery and Gromov-Witten
invariants of Calabi-Yau 3-folds}, Invent.\ Math., {\bf 145} (2001), 151--218.

\bibitem{Mik} G.~Mikhalkin, \emph{Enumerative tropical algebraic
 geometry in $\RR^2$,}
J. Amer. Math. Soc. {\bf 18} (2005), 313--377.

\bibitem{NiXu} J.~Nicaise, C.~Xu: \emph{The essential skeleton of a
degeneration of algebraic varieties}, to appear in American Journal
of Mathematics, preprint, 2013.

\bibitem{NS} T.~Nishinou, B.~Siebert: \emph{Toric degenerations
of toric varieties and tropical curves}, Duke Math.\ J., {\bf 135} (2006),
1--51.

%\bibitem{Ols} M.~Olsson: \emph{Log algebraic stacks and moduli of log schemes,}
%Ph.D.\ thesis, Berkeley.

\bibitem{Park1} B.~Parker: \emph{Exploded manifolds}, Adv.\ in Math., 
{\bf 229} (2012), 3256--3319.

\bibitem{Park2} B.~Parker: \emph{Gromov-Witten invariants of exploded
manifolds}, preprint, 2011.

\bibitem{STalk} B.~Siebert: \emph{Gromov-Witten invariants in relative and
singular cases.} Lecture given at the workshop ``Algebraic aspects of
mirror symmetry,'' Univ.\ Kaiserslautern, Germany, June 2001.

\bibitem{SYZ} A.\ Strominger, S.-T.\ Yau, and E.~Zaslow, \emph{Mirror Symmetry
is $T$-duality,} Nucl.\ Phys.\ {\bf B479}, (1996) 243--259.

\bibitem{Ulirsch} M.~Ulirsch: \emph{Functorial tropicalization of
logarithmic schemes: The case of constant coefficients}, preprint, 2014.

\bibitem{Wise} J.~Wise: \emph{Moduli of morphisms of logarithmic schemes},
Algebra Number Theory {\bf 10} (2016), 695--735.

\bibitem{Yu} T.~Yu: \emph{Enumeration of holomorphic cylinders in log
Calabi-Yau surfaces. I}, preprint, 2015.
\end{thebibliography}
\end{document}